\newtheorem{theo}{Theorem}
\newtheorem{lemma}[theo]{Lemma}
\newtheorem{cor}[theo]{Corollary}
\newtheorem{prop}[theo]{Proposition}
\newtheorem{example}[theo]{Example}
\newcommand\be{\begin{equation}}
\newcommand\ee{\end{equation}}
\begin{document}
\title{Planar self-affine sets with equal Hausdorff, box and affinity dimensions}
\author{Kenneth Falconer \& Tom Kempton}
\maketitle
\abstract{\noindent  Using methods from ergodic theory along with properties of the Furstenberg measure we obtain conditions under which certain classes of plane self-affine sets have Hausdorff or box-counting dimensions equal to their affinity dimension. We exhibit some new specific classes of self-affine sets for which these dimensions are equal.
\footnote{This research was supported by EPSRC grant EP/K029061/1}}

\section{Introduction}
\setcounter{equation}{0}
\setcounter{theo}{0}
A family of contractive maps  $\{T_1,\ldots,T_m\}$ on  $\mathbb R^n$ is termed an {\it iterated function system} or IFS. By standard IFS theory  \cite{Fal,Hut}  there exists a non-empty compact subset of $\mathbb R^n$ satisfying 
\begin{equation}\label{ifs}
E=\bigcup_{i=1}^m T_i(E),
\end{equation}
called  the {\it attractor} of the IFS. 
If the $T_i$ are affine transformations, that is of the form
\be\label{affdef}
T_i x = A_i x+d_i \qquad (1 \leq i \leq m)
\ee
where $A_i$ are linear mappings or matrices on $\mathbb R^n$ with $||A_i||_2<1$ and $d_i \in \mathbb{R}^2$ are translation vectors,  $E$ is termed  a {\it self-affine} set. In the special case when the $T_i$ are all similarities $E$ is called {\it self-similar}.
Self-affine sets are generally fractal and it is natural to investigate their Hausdorff and box-counting dimensions. Whilst the dimension theory  is well-understood in the special case of self-similar sets, at least assuming  some separation or disjointedness condition for the union in  \eqref{ifs}, see \cite{Fal,Hut}, dimensions of self-affine sets are more elusive, not least because the dimensions do not everywhere vary continuously in their defining parameters \cite{Fal}.

The affinity dimension $\dim_A E$ of a self-affine set $E$, which  is defined in terms of the linear components $A_i$ of the affine maps, see \eqref{dimaff}, turns out to be central to these studying the dimensions of self-affine sets.  It is always the case that 
$$\dim_H E \leq {\underline\dim}_B E \leq{\overline\dim}_B E \leq\dim_A E,$$
where ${\underline\dim}_B, {\overline\dim}_B$ and $\dim_H$ denote lower and upper box-counting and Hausdorff  dimensions, see \cite{Fal,Mat} for the definitions. However, in  many situations equality holds here  `generically', that is for almost all parameters in a parametrized family of self-affine sets, see for example \cite{Falconer88,FalSur,Solomyak98}. However, in general, it is not easy to identify for which parameters the generic conclusion holds.

Exact values of Hausdorff and/or box dimensions have been found for several classes of self-affine sets, see the survey \cite{FalSur} and references therein. Particular attention has been given to `carpets' where the affinities preserve horizontal and vertical directions, see \cite{Bedford, FraserBoxlike, Hu98, McMullen}. Such examples are often exceptions to  the generic situation: the box and Hausdorff dimensions need not be equal nor need they equal the affinity dimension. 

By pulling back elongated images of a self-affine set under compositions of affine mappings, it is easy to see that the small scale coverings needed for estimating Hausdorff and box dimensions are related to the projections of the set in certain directions. Indeed, in the case of carpets, as in \cite{Bedford, FraserBoxlike, Hu98, McMullen}, these dimensions depend on the projection of the sets, or projections of measures supported by the set, onto the weak contracting direction, see also \cite{Hu98} for a generalisation of this to other constructions where there is a weak contracting foilation. 

However, self-affine sets do not in general have an invariant contracting direction.  The appropriate analogue is to examine the typical dimension of the projection in directions chosen according to the Furstenberg measure $\mu_F$ on the projective line $\mathbb{RP}^1$ which is supported by the relevant set of directions.  The Furstenberg measure $\mu_F$ is induced in a natural way by the K\"aenmaki measure $\mu$, \cite{Kaenmaki}, which is supported by $E$ and which typically has Hausdorff dimension $\dim_H\mu  = \dim_H E$, see Section \ref{secfur}. 

Throughout this paper $E$ will be  a self-affine subset of $\mathbb R^2$ which satisfies the {\it strong separation condition}, that is with the union in \eqref{ifs} disjoint, and such that the linear parts of the defining the affine transformations map the first quadrant into itself, corresponding to the $A_i$ having strictly positive entries. Our two main theorems relate to sets $E$ with dimension at least 1. The first gives conditons for the (lower) Hausdorff dimension of $E$ to equal its affinity dimension, and this depends on the absolute continuity of the projections of the measure $\mu$. 
By contrast, the second theorem, which gives  conditions for equality of the box-counting dimension and affinity dimension of $E$, depends on the projections of the set $E$ itself. This dichotomy is analogous to that with Bedford-McMullen carpets \cite{Bedford,McMullen} where the Hausdorff and box dimensions of the carpets may be expressed in terms of the projection in the unique contracting direction of measures and sets respectively.

\begin{theo}\label{HDThm}
Let $E\subset \mathbb R^2$ be the  self-affine set defined by the IFS \eqref{affdef} where the $A_i$  are strictly positive matrices and the strong separation condition is satisfied. Let $\mu$ be the K\"aenm\"aki measure and $\mu_F$ the corresponding Furstenberg measure. Suppose that for $\mu_F$-almost all $\theta$ the projection of $\mu$ in direction $\theta$ is absolutely continuous. Then the measure $\mu$ is exact dimensional  and $\dim_H E= \dim_B E=\dim_A E$.
\end{theo}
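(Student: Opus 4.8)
The plan is to collapse the whole statement onto a single lower bound for the dimension of $\mu$, and then to feed the absolute-continuity hypothesis into a Ledrappier--Young-type formula. Since the chain $\dim_H E \le {\underline\dim}_B E \le {\overline\dim}_B E \le \dim_A E$ always holds, and since $\mu$ is supported on $E$ so that $\dim_H E \ge \dim_H\mu$, it suffices to prove the one inequality $\dim_H\mu \ge \dim_A E$: this squeezes all four quantities to a common value. The K\"aenm\"aki measure is constructed precisely so that its Lyapunov dimension attains the affinity dimension, $\dim_L\mu = \dim_A E$, and the Lyapunov dimension is always an upper bound for the genuine dimension, $\dim\mu \le \dim_L\mu$. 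Thus the entire problem reduces to showing that, under the hypothesis, this Lyapunov upper bound is \emph{attained}.

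First I would fix the symbolic model: let $\mathbf m$ be the shift-invariant ergodic measure with $\mu = \pi_*\mathbf m$, let $h = h(\mathbf m)$ be its entropy, and let $0 < \chi_1 < \chi_2$ be the two Lyapunov exponents ($\chi_1$ governing the weak, long-axis direction). Strict positivity of the $A_i$ is exactly what guarantees, via Furstenberg's theorem, that the spectrum is simple ($\chi_1 \ne \chi_2$), that $\mu_F$ is non-atomic, and that the Oseledets directions vary measurably. Writing $\Pi_\theta$ for the projection in the direction $\theta$ and $d := \dim\Pi_\theta\mu$ for the ($\mu_F$-almost everywhere constant) dimension of the projected measure in a $\mu_F$-typical direction, the Ledrappier--Young formula for planar self-affine measures gives exact dimensionality of $\mu$ together with
\[
\dim\mu \;=\; d + \frac{h - \chi_1 d}{\chi_2}.
\]
In the regime $\dim \ge 1$ relevant here one has $\dim_L\mu = 1 + (h-\chi_1)/\chi_2$, so a direct computation shows the deficit from the Lyapunov value to be
\[
\dim_L\mu - \dim\mu \;=\; (1-d)\Bigl(1 - \frac{\chi_1}{\chi_2}\Bigr) \;\ge\; 0,
\]
which vanishes precisely when $d=1$, using $\chi_1 < \chi_2$.

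It then remains to invoke the hypothesis. If $\Pi_\theta\mu$ is absolutely continuous for $\mu_F$-almost all $\theta$, its dimension is $1$, so $d = 1$ and the deficit vanishes: $\dim\mu = \dim_L\mu = \dim_A E$. As $\mu$ is exact dimensional and supported on $E$, this yields $\dim_A E = \dim_H\mu \le \dim_H E$, and combined with the universal upper bound all of $\dim_H E$, ${\underline\dim}_B E$, ${\overline\dim}_B E$ coincide with $\dim_A E$, which in particular gives existence of $\dim_B E$ and the asserted equalities.

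The substantive content, and the main obstacle, is the dimension formula itself rather than the soft reduction above. Establishing it requires analysing the local structure of $\mu$ at the anisotropic scales set by the two exponents: a level-$n$ cylinder carries mass $\approx e^{-nh}$ and maps $E$ into an ellipse with axes $\approx e^{-n\chi_1}$ and $\approx e^{-n\chi_2}$ elongated in the Furstenberg direction, and one must relate $\mu\bigl(B(x,r)\bigr)$ at the fine scale $r \approx e^{-n\chi_2}$ to the way $\mu$ distributes along these directions. The lower bound on the local dimension is where absolute continuity does the real work: it supplies uniform, density-type control on the mass projecting into short intervals in the weak direction, which is exactly what stops the measure from concentrating and forces $d = 1$. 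Two points need care: verifying that the projection direction entering the formula is $\mu_F$-typical, so the a.c.\ hypothesis bears on the correct projections; and handling the interplay between the Oseledets directions and $\mu_F$ through the ergodic theorem, so that the almost-everywhere constant value $d$ is genuinely the projected dimension appearing in the formula.
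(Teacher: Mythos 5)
Your proposal is correct in substance, but it takes a genuinely different route from the paper, and the difference matters. Your soft reduction (it suffices to prove $\dim_H\mu\geq\dim_A E$, since the K\"aenm\"aki measure satisfies $D(\mu)=\dim_A E$ and the chain $\dim_H E\leq{\overline\dim}_B E\leq\dim_A E$ is universal) is also how the paper frames things, and your use of the hypothesis is sound: absolute continuity plus the Lebesgue density theorem gives projected local dimension $d=1$ at $\mu_F\times\mu$-typical pairs, which kills the deficit $(1-d)(1-\chi_1/\chi_2)$; note also that the regime $h\geq\chi_1$ needed for $D(\mu)=1+(h-\chi_1)/\chi_2$ is automatic under your hypothesis, since $d=1$ forces $\dim\mu\geq\dim\pi_\theta\mu=1$. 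The divergence is that you invoke the Ledrappier--Young formula for planar quasi-Bernoulli self-affine measures as a black box, whereas the paper proves exactly the case it needs from scratch: it constructs the two-sided extension and the skew product $P\circ\sigma\circ P^{-1}$ on $\mathbb{RP}^1\times\Sigma$ with invariant measure $\nu$ equivalent to $\mu_F\times\mu$ (Lemma \ref{ergodic} and the Gibbs/quasi-Bernoulli property), proves a slicing estimate comparing $\mu\big(B(\pi(a),\rho\,\alpha_2(a|_n))\big)$ with $\mu[a|_n]\,\mu_{\phi_{a_n\cdots a_1}(\theta)}(I)$ for an interval $I$ of length $\asymp\alpha_2(a|_n)/\alpha_1(a|_n)$ (Lemmas \ref{equiv} and \ref{asymp}), and then applies the Birkhoff ergodic theorem to the set where the absolutely continuous projections have local dimension within $\epsilon$ of $1$, obtaining $\dim_{\rm loc}\mu=D(\mu)$ along a density-one set of scales (Lemma \ref{1d}, Proposition \ref{prop4.4}), which suffices because $\alpha_2(a|_n)$ decays geometrically. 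In effect the paper is a self-contained proof of the $d=1$ case of the Ledrappier--Young formula, and your closing sketch of what proving the formula would require (anisotropic scales, cylinder mass $e^{-nh}$, slices in the weak direction, matching the slicing directions to $\mu_F$ via ergodicity) is precisely that argument. What your route buys is brevity and generality: the formula handles any value of $d$ and gives exact dimensionality for free. What it costs is that the formula (B\'ar\'any; B\'ar\'any--K\"aenm\"aki) is a deep external theorem that was not available when this paper was written --- the introduction cites B\'ar\'any's preprint \cite{Bar} exactly as a parallel, methodologically different development --- so as a standalone proof your proposal defers its entire substantive content to that citation, a dependence you candidly acknowledge but do not discharge.
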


\begin{theo}\label{BoxThm}
Let $E\subset \mathbb R^2$ be the  self-affine set defined by the IFS \eqref{affdef} where the $A_i$  are strictly positive matrices and the strong separation condition is satisfied. Suppose that the projection of $E$ has positive Lebesgue measure in a set of directions of positive $\mu_F$-measure. 
Then $\dim_B E= \dim_A E$.
\end{theo}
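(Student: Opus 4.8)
The plan is to prove the two inequalities $\overline\dim_B E\le\dim_A E$ and $\underline\dim_B E\ge\dim_A E$ separately; since one always has $\underline\dim_B E\le\overline\dim_B E\le\dim_A E$, only the lower bound requires the projection hypothesis. Write $s=\dim_A E\in[1,2]$ and let $\phi^s(A)=\alpha_1(A)\alpha_2(A)^{s-1}$ be the singular value function, where $\alpha_1\ge\alpha_2$ denote the singular values. First I would fix a scale $\delta$ and introduce the stopping set $\mathcal I_\delta=\{\mathbf i:\alpha_2(A_{\mathbf i})\le\delta<\alpha_2(A_{\mathbf i^-})\}$, where $\mathbf i^-$ is the parent word. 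Because the $A_i$ are strictly positive the cocycle is quasi-multiplicative, so $\phi^s$ is sub- and supermultiplicative up to constants; at the critical exponent this yields $\sum_{\mathbf i\in\mathcal I_\delta}\phi^s(A_{\mathbf i})\asymp1$ uniformly in $\delta$, together with bounded distortion $\alpha_2(A_{\mathbf i})\asymp\delta$ on $\mathcal I_\delta$. The upper bound $\overline\dim_B E\le s$ then follows by the standard computation: each $T_{\mathbf i}(E)$, $\mathbf i\in\mathcal I_\delta$, lies in a rectangle of dimensions $\alpha_1(A_{\mathbf i})\times\delta$ and is covered by $\lesssim\alpha_1(A_{\mathbf i})/\delta\asymp\phi^s(A_{\mathbf i})\delta^{-s}$ mesh cubes of side $\delta$, and summing gives $N(\delta)\lesssim\delta^{-s}$.

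The heart of the matter is the lower bound $N(\delta)\gtrsim\delta^{-s}$. For $\mathbf i\in\mathcal I_\delta$ the set $T_{\mathbf i}(E)$ is an affine copy of $E$ squeezed into a thin rectangle whose long axis points in the direction $\theta_{\mathbf i}$ of the leading left singular vector of $A_{\mathbf i}$; projecting orthogonally onto this axis one computes, using the singular value decomposition, that the projection of $T_{\mathbf i}(E)$ is, up to the constant translation, exactly $\alpha_1(A_{\mathbf i})$ times the projection $P_{\eta_{\mathbf i}}(E)$ of $E$ in the corresponding leading right singular direction $\eta_{\mathbf i}$. Consequently, if $\eta_{\mathbf i}$ lies in the set $G=\{\theta:\mathcal L(P_\theta E)>0\}$ of good directions, the number of $\delta$-cubes meeting $T_{\mathbf i}(E)$ is at least $\mathcal L(P_{\theta_{\mathbf i}}T_{\mathbf i}E)/(\sqrt2\,\delta)\gtrsim\alpha_1(A_{\mathbf i})\,c_{\mathbf i}/\delta$, where $c_{\mathbf i}=\mathcal L(P_{\eta_{\mathbf i}}E)$. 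To quantify how many cylinders are good I would use the construction of Section \ref{secfur}, where $\mu_F$ is the push-forward of the K\"aenm\"aki measure under the direction map and that measure assigns the cylinder $\mathbf i$ a weight comparable to $\phi^s(A_{\mathbf i})$. Since $\mu_F(G)>0$, passing to a compact $G'\subseteq G$ with $\mu_F(G')>0$ on which $c_{\mathbf i}\ge c_0>0$, stationarity of $\mu_F$ and equidistribution of the directions $\eta_{\mathbf i}$ give $\sum_{\mathbf i\in\mathcal I_\delta,\,\eta_{\mathbf i}\in G'}\phi^s(A_{\mathbf i})\gtrsim\mu_F(G')>0$ for all small $\delta$. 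Combining this with $\alpha_1(A_{\mathbf i})\asymp\phi^s(A_{\mathbf i})\delta^{1-s}$, the good cylinders would force, if their contributions could be added without over-counting,
\[
N(\delta)\;\gtrsim\;\frac{c_0}\delta\sum_{\substack{\mathbf i\in\mathcal I_\delta\\ \eta_{\mathbf i}\in G'}}\alpha_1(A_{\mathbf i})\;\gtrsim\;\delta^{-s}\sum_{\substack{\mathbf i\in\mathcal I_\delta\\ \eta_{\mathbf i}\in G'}}\phi^s(A_{\mathbf i})\;\gtrsim\;\delta^{-s},
\]
which gives $\underline\dim_B E\ge s$ and hence the theorem.

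The main obstacle is precisely the over-counting hidden in the last display: a single $\delta$-cube can meet $T_{\mathbf i}(E)$ for many distinct good cylinders, so the per-cylinder box counts cannot simply be summed. To control this I would decompose the plane into parallel slabs of width $\delta$ perpendicular to a reference direction and count within each slab separately: cubes in distinct slabs are distinct, the sets $T_{\mathbf i}(E)$ are pairwise disjoint by the strong separation condition, and each has thickness comparable to $\delta$, so only boundedly many good cylinders should meet a given slab and each cylinder meets boundedly many slabs. The delicate point is that the long directions $\theta_{\mathbf i}$ are not exactly equal, so a thin cylinder of length $\alpha_1(A_{\mathbf i})$ can spread across many slabs; I expect to handle this by grouping cylinders according to their leading direction into finitely many narrow angular sectors and exploiting the slow, H\"older variation of $\theta_{\mathbf i}$ that positivity of the matrices forces on the projective action, so that within each sector the bounded-multiplicity estimate is recovered. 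Making this covering-multiplicity bound precise and uniform in $\delta$ is the step I expect to be the most technical.
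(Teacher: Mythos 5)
Your overall architecture --- stopping set $\mathcal I_\delta$, K\"aenm\"aki/Gibbs weights comparable to $\phi^s$, box-count lower bounds for cylinders via projections of $E$ in pulled-back directions, bounded covering multiplicity --- is the same as the paper's, but there is a genuine gap at the step you treat as routine. You fix for each cylinder the single direction $\eta_{\mathbf i}$ coming from the SVD of $A_{\mathbf i}$ and assert that stationarity and equidistribution give $\sum_{\mathbf i\in\mathcal I_\delta,\,\eta_{\mathbf i}\in G'}\phi^s(A_{\mathbf i})\gtrsim\mu_F(G')$ for a compact $G'\subseteq G$ of positive $\mu_F$-measure. This does not follow, and can fail outright. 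First, weak-$*$ convergence of the discrete measures putting mass $\mu[\mathbf i]$ at the points $\eta_{\mathbf i}$ towards $\mu_F$ gives, for a \emph{closed} set $G'$, only the upper bound $\limsup_{\delta\to 0}\sum_{\eta_{\mathbf i}\in G'}\mu[\mathbf i]\le\mu_F(G')$; the inequality you need goes the other way, and you cannot pass to an open neighbourhood of $G'$ because $\theta\mapsto\mathcal L(P_\theta E)$ is only upper semicontinuous, so directions arbitrarily close to $G'$ may project $E$ to Lebesgue measure zero. Second, and more fatally, $\eta_{\mathbf i}$ is $\phi_{a_n\cdots a_1}$ applied to the minor-axis direction of the ellipse $A_{\mathbf i}(D)$: it is a point of the interval $\phi_{a_n\cdots a_1}(\mathcal Q_2)$, but it need not lie in $\operatorname{supp}\mu_F$, which is the attractor of the IFS $\{\phi_i\}$ and in general a Cantor set. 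Thus all the $\eta_{\mathbf i}$ can sit in the gaps of $\operatorname{supp}\mu_F$ and miss $G'$ entirely, making your sum zero while $\mu_F(G')>0$. The paper is structured precisely to avoid this: Lemma \ref{lemB} bounds $N(\epsilon,E_{a_1\cdots a_n})$ below by a constant times $\frac{\alpha_1}{\alpha_2}\,\mathcal L\big(\pi_{\phi_{a_n\cdots a_1}(\theta)}(E)\big)$ for \emph{every} $\theta\in J$ (your exact SVD identity is replaced by the uniformly comparable scaling factor \eqref{rhodef}), and then a covering argument replaces equidistribution: the intervals $\phi_{a_n\cdots a_1}(J)$, $a_1\cdots a_n\in W(\epsilon)$, cover $\operatorname{supp}\mu_F$, so the cylinders whose whole direction-interval meets $G$ carry total $\mu$-mass at least comparable to $\mu_F(G)$, and for each such cylinder one \emph{chooses} $\theta\in J$ with $\phi_{a_n\cdots a_1}(\theta)\in G$. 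That flexibility of direction is essential; the pointwise matching by singular directions cannot be rescued by a limiting argument.

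By contrast, the obstacle you single out as the most technical --- the covering multiplicity --- has a two-line solution requiring no slabs, angular sectors, or H\"older control of the long directions. Two distinct words of $\mathcal I_\delta$ agree up to a common prefix $u$ and then differ, so the corresponding cylinder sets are images under $T_u$ of distinct sets $T_i(E)$, $T_j(E)$, which are at distance at least $d$ (the separation constant from the strong separation condition); since $\alpha_2(A_u)>\delta$ by the stopping rule and $T_u$ contracts distances by a factor no worse than $\alpha_2(A_u)$, the sets $\{T_{\mathbf i}(E):\mathbf i\in\mathcal I_\delta\}$ are pairwise $d\delta$-separated. Hence any ball of radius $\delta$ meets at most $O(1/d^2)$ of them --- this is exactly the paper's inequality \eqref{boxnos}, with $M=24/d^2$ --- and your final display for $N(\delta)$ then goes through, but only after the good-cylinder mass bound is repaired by the interval/covering device described above.
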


Note that the conclusion of Theorem \ref{BoxThm} was obtained in \cite{FalSA2} under the much stronger condition of the  projection of $E$ in all directions  having Lebesgue measure greater than some positive constant.

A number of corollaries follow easily from these theorems.

\begin{cor}\label{Cor1}
Let $E\subset \mathbb R^2$ be the  self-affine set defined by the IFS \eqref{affdef} where the $A_i$  are strictly positive matrices and the strong separation condition is satisfied. Let $\mu$ be the corresponding K\"aenmaki measure and  assume that $\dim_H \mu >1$.
If the Furstenberg measure $\mu_F$  is absolutely continuous with respect to Lebesgue measure on $\mathbb{RP}^1$ then $\dim_H E= \dim_B E = \dim_A E$.
\end{cor}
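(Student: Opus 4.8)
The plan is to deduce Corollary~\ref{Cor1} directly from Theorem~\ref{HDThm} by verifying its hypothesis: namely that for $\mu_F$-almost all $\theta$ the projection $\pi_\theta\mu$ is absolutely continuous with respect to Lebesgue measure, where $\pi_\theta$ denotes orthogonal projection onto the line through the origin in direction $\theta$. The two assumptions at our disposal are $\dim_H\mu>1$ and $\mu_F\ll\mathcal{L}^1$ on $\mathbb{RP}^1$. The strategy is first to establish absolute continuity of $\pi_\theta\mu$ for \emph{Lebesgue}-almost every $\theta$ using a classical projection theorem for measures of dimension greater than one, and then to upgrade ``Lebesgue-almost every'' to ``$\mu_F$-almost every'' using the absolute continuity of $\mu_F$.

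For the first step I would invoke the Marstrand--Mattila projection theorem for measures \cite{Mat}. Since $\dim_H\mu>1$, fix $s$ with $1<s<\dim_H\mu$. Because $\mu$-almost every point has lower local dimension exceeding $s$, a standard Frostman decomposition lets me write $\mu$ as the increasing limit of the restrictions $\mu_n=\mu|_{E_n}$, where $E_n=\{x:\mu(B(x,r))\le r^{s}\text{ for all }r\le 1/n\}$; these sets exhaust $\mu$ up to a null set, and each $\mu_n$ satisfies a uniform Frostman condition of exponent $s$ and hence has finite $t$-energy $I_t(\mu_n)<\infty$ for every $t\in(1,s)$. The projection theorem then gives, for each fixed $n$, that $\pi_\theta\mu_n\ll\mathcal{L}^1$ for Lebesgue-almost every $\theta$; intersecting over the countably many $n$ preserves this for a single full-measure set of directions.

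To pass to $\mu$ itself, note that $\pi_\theta\mu-\pi_\theta\mu_n=\pi_\theta(\mu-\mu_n)\ge 0$ has total mass $(\mu-\mu_n)(\mathbb{R}^2)\to 0$. If $N\subset\mathbb{R}$ is a Lebesgue-null set carrying the singular part of $\pi_\theta\mu$, then $\pi_\theta\mu_n(N)=0$ for every $n$, whence $(\pi_\theta\mu)(N)\le(\mu-\mu_n)(\mathbb{R}^2)\to 0$; thus the singular part vanishes and $\pi_\theta\mu\ll\mathcal{L}^1$ for Lebesgue-almost every $\theta$. Since $\mu_F$ is absolutely continuous with respect to $\mathcal{L}^1$ on $\mathbb{RP}^1$, any property holding for Lebesgue-almost every $\theta$ also holds for $\mu_F$-almost every $\theta$. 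This verifies precisely the hypothesis of Theorem~\ref{HDThm}, and the conclusion $\dim_H E=\dim_B E=\dim_A E$ follows.

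The corollary is thus largely a bookkeeping consequence of Theorem~\ref{HDThm}, with the absolute continuity of $\mu_F$ entering only at the final conversion from Lebesgue-generic to $\mu_F$-generic directions. The one step requiring care is the reduction from $\dim_H\mu>1$ to pieces of finite $(>1)$-energy, since $\dim_H\mu>1$ does not by itself guarantee $I_t(\mu)<\infty$ for any $t>1$; the restriction-and-limit argument above is what circumvents this, and is the main technical point of the proof.
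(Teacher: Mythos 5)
Your proposal is correct and takes essentially the same route as the paper: deduce from $\dim_H\mu>1$ and a Marstrand-type projection theorem that $\pi_\theta\mu$ is absolutely continuous for Lebesgue-almost every $\theta$, use $\mu_F\ll\mathcal{L}^1$ to upgrade this to $\mu_F$-almost every $\theta$, and then invoke Theorem \ref{HDThm}. The only difference is that the paper handles the first step by citing \cite{Fal,HuntKaloshin,Mar}, whereas you supply the standard proof of that cited result (Frostman restriction to pieces of finite $t$-energy for $t>1$, projection theorem for each piece, then a limiting argument), which is a faithful filling-in of the same argument rather than a different approach.
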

\begin{proof}
Since $\dim_H \mu>1$, Marstrand's projection theorem \cite{Fal,HuntKaloshin,Mar} implies that the projection of $\mu$  is absolutely continuous in Lebesgue-almost every direction, and hence in $\mu_F$-almost every direction, since $\mu_F$ is absolutely continuous. The conclusion follows from Theorem  \ref{HDThm}.
\end{proof}

Note that for large regions of parameter space, for almost every collection of matrices $\{A_1\cdots A_m\}$ the corresponding Furstenberg measure is absolutely continuous \cite{BPS}, in which case Corollary \ref{Cor1} applies. 

The next corollary often enables us to obtain the precise value of $\dim_H E$  by just finding rather crude lower bounds for $\dim_H E$ and $\dim_H \mu_F$.

\begin{cor}\label{Cor2}
Let $E\subset \mathbb R^2$ be the  self-affine set defined by the IFS \eqref{affdef} where the $A_i$  are strictly positive matrices and the strong separation condition is satisfied. Let $\mu$ be the corresponding K\"aenmaki measure.
Suppose that $\dim_H \mu+ \dim_H \mu_F>2$. Then  $\dim_H E= \dim_B E = \dim_A E$.
\end{cor}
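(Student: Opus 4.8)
The plan is to reduce Corollary \ref{Cor2} to Theorem \ref{HDThm} by showing that the hypothesis $\dim_H\mu + \dim_H\mu_F > 2$ forces the projection of $\mu$ in direction $\theta$, which I denote $\pi_\theta\mu$, to be absolutely continuous for $\mu_F$-almost every $\theta$. The tool that replaces the plain Marstrand projection theorem used in Corollary \ref{Cor1} is the quantitative (exceptional-set) refinement of the projection theorem for absolute continuity.

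First I would note that $\mu_F$ is carried by $\mathbb{RP}^1$, so $\dim_H\mu_F \le 1$, and hence the hypothesis gives $\dim_H\mu > 2 - \dim_H\mu_F \ge 1$; in particular $\dim_H\mu>1$, the regime in which projections are typically absolutely continuous. I would then fix a number $s$ with
$$ 2 - \dim_H\mu_F < s < \dim_H\mu , $$
which is possible precisely because $\dim_H\mu + \dim_H\mu_F > 2$, and observe that automatically $s > 2 - \dim_H\mu_F \ge 1$.

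Next, since $s < \dim_H\mu$, the measure $\mu$ (being exact dimensional, or by a Frostman/Egorov argument after restricting to a subset $A$ of positive measure on which $\mu(B(x,r)) \le C r^{s}$ holds uniformly) has finite $s$-energy $I_s(\mu|_A) = \iint |x-y|^{-s}\, d\mu(x)\, d\mu(y) < \infty$, where absolute continuity of the projections of such pieces yields absolute continuity of the projection of $\mu$ itself. The exceptional-set version of the projection theorem in \cite{Fal,Mat} then asserts, because $s>1$, that the set
$$ \Theta = \{\theta \in \mathbb{RP}^1 : \pi_\theta\mu \text{ is not absolutely continuous}\} $$
has Hausdorff dimension $\dim_H\Theta \le 2 - s$.

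Finally I would combine this with the choice of $s$: since $2 - s < \dim_H\mu_F$, the exceptional set satisfies $\dim_H\Theta < \dim_H\mu_F$, and because $\mu_F$ assigns zero mass to every Borel set of Hausdorff dimension strictly smaller than $\dim_H\mu_F$, we conclude $\mu_F(\Theta) = 0$. Thus for $\mu_F$-almost every $\theta$ the projection $\pi_\theta\mu$ is absolutely continuous, and Theorem \ref{HDThm} gives $\dim_H E = \dim_B E = \dim_A E$. The main obstacle is the third step: one needs the exceptional-set estimate $\dim_H\Theta \le 2 - s$ for the \emph{failure} of absolute continuity, rather than Marstrand's bare almost-everywhere statement, together with a clean passage from finite $s$-energy of a Frostman piece of the exact-dimensional measure $\mu$ to absolute continuity of $\pi_\theta\mu$ for $\mu_F$-typical $\theta$.
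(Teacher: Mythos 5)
Your argument is correct and follows essentially the same route as the paper: the paper likewise observes that $\dim_H\mu_F\le 1$ forces $\dim_H\mu>1$, invokes the exceptional-set refinement of the projection theorem (citing Falconer and Mattila) to bound the dimension of the set of directions in which the projection of $\mu$ fails to be absolutely continuous by $2-\dim_H\mu<\dim_H\mu_F$, concludes that this exceptional set is $\mu_F$-null, and applies Theorem \ref{HDThm}. Your intermediate parameter $s$ and the Frostman/energy reduction are just a more detailed rendering of the same step; the only point to tighten is that a single positive-measure Frostman piece does not suffice --- one should exhaust $\mu$ (up to a null set) by countably many such pieces, since absolute continuity of the projection of one piece says nothing about the rest, whereas a countable sum of absolutely continuous projections is absolutely continuous and a countable union of exceptional sets of dimension at most $2-s$ still has dimension at most $2-s$.
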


\begin{proof}
Since $\dim_H \mu_F \leq 1$ the assumption requires that $\dim_H \mu>1$. By results on the dimension of the exceptional set of projections \cite{Fal2,Mat}, the projection of $\mu$ in direction $\theta$ is absolutely continuous
for all $\theta$ except for a set of $\theta$ of Hausdorff dimension at most $2-\dim_H \mu< \dim_H \mu_F$. Hence the projection of $\mu$ is absolutely continuous in $\mu_F$-almost all directions, so the conclusion follows from Theorem  \ref{HDThm}.
\end{proof}

In Section 5 we use these ideas to give explicit constructions of classes of self-affine sets which have Hausdorff dimensions equal to their affinity dimensions. 

When writing up this research the authors became aware of a preprint \cite{Bar} which also gives an ergodic theoretic approach to  self-affine sets and measures, though the methods and specific examples there are very different.

\section{Preliminaries}
\setcounter{equation}{0}
\setcounter{theo}{0}

After rescaling, which does not affect dimension, we may assume that each $T_i$  in \eqref{affdef} maps the unit disk $D$ strictly inside itself.
We denote composition of functions by concatenation and write $T_{a_1\cdots a_n} = T_{a_1}T_{a_2}\cdots T_{a_n}$, etc, where $1 \leq a_i \leq m$. Similarly, we write $E_{a_1\cdots a_n} = T_{a_1\cdots a_n}(E)$ for the image of $E$ under such compositions.
Let $\alpha_1(a_1\cdots a_n)\geq \alpha_2(a_1\cdots a_n)>0$ be the {\it singular values} of $A_{a_1\cdots a_n}$, that is  the lengths of the major and minor semiaxes of the  ellipses
$D_{a_1\cdots a_n}$, or equivalently the positive square roots of the eigenvalues of $A_{a_1\cdots a_n}A^T_{a_1\cdots a_n}$.
Note that $\alpha_1$ and $ \alpha_2$ depend only on the $A_{i}$ and are independent of the translations $d_i$.
The {\it affinity dimension}  a set of linear mappings on $\mathbb R^2$ or $2\times 2$ matrices is given by
\be\label{dimaff}
\dim_{A}(A_1,\ldots,A_m)=\inf\Big\{s:\sum_{n=1}^{\infty}\sum_{a_1\cdots a_n\in\{1,\ldots,m\}^n}\phi^s(A_{a_1\cdots a_{n}})<\infty\Big\},
\ee
where
\[
\phi^s(A)=\left\lbrace\begin{array}{cc}\alpha_1^s & 0<s\leq 1\\
                       \alpha_1\alpha_2^{s-1} & 1\leq s \end{array}\right. ,
\]
for a matrix $A$ with singular values $\alpha_1\geq \alpha_2>0$, see \cite{Falconer88, Fal}.
When the transformations that define a self-affine set $E$ are clear, we often write $\dim_A E$ for the affinity dimension, though strictly it depends on the defining IFS of $E$. 
We seek conditions under which the Hausdorff dimension  or box dimension of a self-affine set coincides with its affinity dimension. 

We set $\Sigma := \{1,\ldots,m\}^{\mathbb N}$ and for the infinite word $a = a_1 a_2 \cdots\in \Sigma$ we write $a|_n := a_1\cdots a_n$ for its curtailment after $n$ letters. Subsets of $\Sigma$ of the form $[b] := \{a\in \Sigma: a|_n=b\}$, where  $b \in \{1,\ldots,m\}^n$ is a finite word, are called {\it cylinders}. Since each $T_i$ is a contraction, for each ${a}\in \Sigma$ and $y\in \mathbb R^2$ the sequence $(T_{a|_n}(y))$ has a unique limit point $x\in E$ which is independent of the choice of $y\in \mathbb R^2$. We call the word ${a}= a_1a_2\cdots$ the {\it code} of $x$, and define the {\it projection} $\pi:\Sigma\to E$ to be the map $\pi({a}) = \lim_{n \to \infty} T_{a|_n}(y)$;  the strong separation condition implies that $\pi$ is a bijection.

Let $\mu$ be a measure on $ \Sigma$ which we identify with a measure on subsets of $E$ under $\pi$ in the natural way, so that $\mu[a|_n] = \mu(E_{a|n})$. The {\it Lyapunov exponents} $\lambda_1(\mu), \lambda_2(\mu)$ are defined as the constants such that, for $\mu$-almost every ${a}\in \Sigma$,
\begin{equation}\label{lyap}
\lim_{n\to\infty}\frac{1}{n}\log \alpha_i(a|_n)=\lambda_i
\end{equation}
for $i\in\{1,2\}$. The {\it Lyapunov dimension} of $\mu$ is given by
\begin{equation}\label{lyapdim}
D(\mu):= \left\lbrace
\begin{array}{cc}
\dfrac{h(\mu)}{-\lambda_1(\mu)} & \quad h(\mu)\leq -\lambda_1(\mu)\\
1+\dfrac{h(\mu) + \lambda_1(\mu)}{-\lambda_2(\mu)}& \quad h(\mu)\geq -\lambda_1(\mu)
\end{array}\right. ,
\end{equation}
where $h(\mu)$ is the Kolmogorov-Sinai entropy of the system $(\Sigma,\sigma,\mu)$ and $\sigma$ is the left shift on $\Sigma$.
Note that $D(\mu)$ depends only on the matrices $\{A_1,\cdots, A_m\}$ and the measure $\mu$. 

There exists a probability measure $\mu$ on $\Sigma$, known as the {\em K\"aenm\"aki measure}, which is ergodic and shift invariant and satisfies $D(\mu)=\dim_A(A_1,\cdots,A_m)$, see \cite{Kaenmaki} and \cite[Proposition 1.8]{JPS}. Furthermore, from  \cite[Theorem 3.5]{KaenmakiReeve}, $\mu$ is a Gibbs measure assuming, as we do,  that the matrices $A_i$ are strictly positive\footnote{Indeed, while this paper was under review, it was shown by B\'ar\'any and Rams \cite{BaranyRams} that $\mu$ is a Gibbs measure associated to an additive, rather than just a subadditive potential.}. From now on $\mu$ will denote this probability measure. We will prove  in the setting of Theorem \ref{HDThm} that  $\dim_H \mu =D(\mu)$ from which equality of the $\dim_H E$ with the affinity dimension follows.

Here we define
\[
\dim_H \mu:=\inf\{\dim_H A :\mu(A)=1\}.
\]
This is sometimes known as lower Hausdorff dimension, although all notions of Hausdorff dimension coincide for exact dimensional measures, and we will see later that the measures that we use are exact dimensional.

\section{The Furstenberg measure and dynamics on projections}\label{secfur}
\setcounter{equation}{0}
\setcounter{theo}{0}

For $1\leq i \leq m$ let $\phi_i:\mathbb{RP}^1 \to \mathbb{RP}^1$ be the projective linear transformation on the projective line  associated with the matrix $A_i^{-1}$ given by
\begin{equation}\label{phidef}
\phi_i(\theta)=\dfrac{A_i^{-1}(\theta)}{\|A_i^{-1}(\theta)\|}
\end{equation}
where $\|\ \|$ denotes the Euclidean norm, and where we parameterize $\mathbb{RP}^1$ by unit vectors in the obvious way.
The {\it Furstenberg measure} $\mu_F$ is defined to be the stationary measure on $\mathbb{RP}^1$ associated to the maps $\phi_i$ chosen according to the measure $\mu$. Alternatively, setting
\[
\phi_{a_n\cdots a_1}:=\phi_{a_n}\phi_{a_{n-1}}\cdots\phi_{a_1},
\]
then for $\mu$-almost every ${a}\in\Sigma$ and all $\theta\in \mathbb{RP}^1$, the sequence of measures
\[
\frac{1}{n}\sum_{k=0}^{n-1} \delta_{\phi_{a_k\cdots a_1}(\theta)}
\]
converges weakly to  $\mu_F$ on $\mathbb{RP}^1$. See B{\'a}r{\'a}ny, Pollicott and Simon \cite{BPS} for further discussion of the Furstenberg measure. 

With strictly positive matrices $A_i$, the transformations $\phi_i$ are strict contractions of the negative quadrant $\mathcal Q_2\subset \mathbb{RP}^1$ under a metric $d(\theta_1,\theta_2)$ given by the absoute angle between $\theta_1,\theta_2 \in \mathcal Q_2$. With respect to this metric, the Furstenberg measure is an invariant probability measure on the strictly contractive IFS $\{\phi_1,\ldots,\phi_m\}$. Alternatively one could work with the variant of the Hilbert metric $d_H$  discussed by Birkhoff \cite{Birkhoff}.

For $\theta \in  \mathbb{RP}^1$ let $\pi_{\theta}:E\to[-1,1]$ be the map obtained by projecting the self-affine set $E$ defined by \eqref{ifs} in direction $\theta$ onto the diameter of the unit disc $D$ at angle $\theta^{\perp}$, and then isometrically mapping this diameter onto $[-1,1]$. With a slight abuse of notation, we also denote by $\pi_{\theta}$ the composition $\pi_{\theta}\circ \pi: \Sigma \to [-1,1]$.
For each $\theta$ let $\mu_{\theta}:=\mu\circ\pi_{\theta}^{-1}$ be the corresponding projection of the measure $\mu$ onto $[-1,1]$.

We now consider the two-sided shift space $\Sigma^{\pm}:=\{1,\cdots,m\}^{\mathbb Z}$ where we denote a typical member $\cdots a_{-2}a_{-1}a_0a_1a_2\cdots$ by $\overline{a}$. We define  $\overline{\mu}$ to be the unique, shift invariant measure  on $\Sigma^{\pm}$ for which $\mu[a_m\cdots a_n]=\overline{\mu}[a_m\cdots a_n]$ for every cylinder depending only on positive coordinates. 

The limit
\[
\rho(\overline{a}):=\lim_{n\to\infty} \phi_{a_0}\phi_{a_{-1}}\cdots \phi_{a_{-n}}(\theta)
\]
exists for all $\overline{a}\in\Sigma^{\pm}$ and is independent of $\theta\in\mathcal Q_2$, this is just a standard iterated function system argument since the maps $\phi_i$ contract $\mathcal Q_2$.

We define the map $P:\Sigma^{\pm}\to \mathbb{RP}^1\times\Sigma$ by
\[
P({\overline{a}}):= \big(\rho(\overline{a}), a_1a_2\cdots\big)
\]
Here the non-positive coordinates of $\overline{a}$ determine an angle in $\mathcal Q_2$ and the positive coordinates are unchanged. 

Let $\nu$ be the measure on $\mathcal Q_2\times\Sigma$ defined by pushing forward $\overline{\mu}$ by $P$, formally
\[
\nu:=\overline{\mu}\circ P^{-1}.
\]

\begin{lemma}\label{ergodic}
The map $P\circ \sigma\circ P^{-1}:\mathbb{RP}^1\times\Sigma\to\mathbb{RP}^1\times\Sigma$ is well defined. Furthermore, the system $(\mathbb{RP}^1\times\Sigma,\nu,P\circ\sigma\circ P^{-1})$ is ergodic.
\end{lemma}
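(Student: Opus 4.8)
The plan is to identify the induced map $P\circ\sigma\circ P^{-1}$ by an explicit formula, and then to recognise the whole system as a measurable factor of the two-sided shift $(\Sigma^{\pm},\overline\mu,\sigma)$. First I would do the bookkeeping on the shift. Writing $\overline a=\cdots a_{-1}a_0a_1\cdots$, so that $P(\overline a)=(\rho(\overline a),a_1a_2\cdots)$, the positive coordinates of $\sigma\overline a$ are $a_2a_3\cdots$, while peeling off the outermost factor in the defining limit gives
\[
\rho(\sigma\overline a)=\lim_{n\to\infty}\phi_{a_1}\phi_{a_0}\cdots\phi_{a_{-n+1}}(\theta)=\phi_{a_1}\big(\rho(\overline a)\big).
\]
Hence $P(\sigma\overline a)=\big(\phi_{a_1}(\rho(\overline a)),\,a_2a_3\cdots\big)$, so that, writing $a=a_1a_2\cdots$,
\[
P\circ\sigma\circ P^{-1}(\theta,a)=\big(\phi_{a_1}(\theta),\,\sigma a\big).
\]
The right-hand side depends only on the pair $(\theta,a)=(\rho(\overline a),a_1a_2\cdots)$ and not on the particular preimage $\overline a$ — which is the point at issue, since $P$ need not be injective. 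Because each $\phi_i$ is defined on all of $\mathbb{RP}^1$ by \eqref{phidef}, the formula defines a single-valued map $F:=P\circ\sigma\circ P^{-1}$ on all of $\mathbb{RP}^1\times\Sigma$, establishing the first assertion; note also that $\phi_{a_1}(\theta)$ stays in $\mathcal Q_2$ when $\theta=\rho(\overline a)\in\mathcal Q_2$, since each $\phi_i$ maps $\mathcal Q_2$ into itself.

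For ergodicity I would argue by factorisation. The identity just derived is exactly the intertwining relation $P\circ\sigma=F\circ P$, and by construction $\nu=\overline\mu\circ P^{-1}$; combined with $\sigma$-invariance of $\overline\mu$ this gives $F$-invariance of $\nu$ automatically, since $F_*\nu=F_*P_*\overline\mu=P_*\sigma_*\overline\mu=P_*\overline\mu=\nu$. Moreover $\rho$ is continuous on $\Sigma^{\pm}$ because the $\phi_i$ contract $\mathcal Q_2$ uniformly, so both $P$ and $F$ are continuous, hence measurable. Thus $(\mathbb{RP}^1\times\Sigma,\nu,F)$ is a measure-preserving factor of $(\Sigma^{\pm},\overline\mu,\sigma)$ via $P$. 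Since any factor of an ergodic measure-preserving system is ergodic (an $F$-invariant set pulls back under $P$ to a $\sigma$-invariant set of the same $\overline\mu$-measure), it remains only to verify that $(\Sigma^{\pm},\overline\mu,\sigma)$ is itself ergodic.

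This last point follows from recognising $\overline\mu$ as the natural extension of $\mu$. The projection onto positive coordinates $\pi_+:\Sigma^{\pm}\to\Sigma$ satisfies $\pi_+\circ\sigma=\sigma\circ\pi_+$ and pushes $\overline\mu$ forward to $\mu$ by the defining property of $\overline\mu$; together with the fact that $\overline\mu$ is shift-invariant and uniquely determined by its values on cylinders depending only on positive coordinates, this exhibits $(\Sigma^{\pm},\overline\mu,\sigma)$ as the natural extension of $(\Sigma,\mu,\sigma)$. Since the K\"aenm\"aki measure $\mu$ is ergodic and the natural extension of an ergodic system is ergodic, $\overline\mu$ is ergodic, and hence so is $\nu$.

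I expect the only genuinely delicate point to be the well-definedness: because $P$ collapses the non-positive coordinates through the limit $\rho$, one must check that $\sigma$ descends consistently to the quotient, which is precisely what the identity $\rho(\sigma\overline a)=\phi_{a_1}(\rho(\overline a))$ guarantees. The remaining steps — continuity and measurability of $P$, ergodicity of the natural extension, and ergodicity of factors — are standard.
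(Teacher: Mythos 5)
Your proof is correct and follows essentially the same route as the paper's: the identity $\rho(\sigma\overline a)=\phi_{a_1}(\rho(\overline a))$ yields both well-definedness and the explicit formula $P\circ\sigma\circ P^{-1}(\theta,a)=(\phi_{a_1}(\theta),\sigma a)$, after which ergodicity follows because $(\mathbb{RP}^1\times\Sigma,\nu,P\circ\sigma\circ P^{-1})$ is a factor of the ergodic system $(\Sigma^{\pm},\overline\mu,\sigma)$. The only difference is that you also spell out why $(\Sigma^{\pm},\overline\mu,\sigma)$ is ergodic (as the natural extension of the ergodic K\"aenm\"aki system), a point the paper takes for granted.
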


Note that while $P^{-1}(\theta,\underline a)$ may be set valued, $P\circ\sigma\circ P^{-1}$ is well defined, since if $P(\overline a)=P({\overline a}')$ for ${\overline a},{\overline a}' \in \Sigma^{\pm}$ then
$P\circ\sigma(\overline a)=P\circ\sigma({\overline a}')$.

\begin{proof}
Given $(\theta,a)$ $\in\mathbb{PR}^1\times\Sigma$, the set $P^{-1}(\theta,a)$ consists of those two-sided sequences $\overline a$ for which $\rho(\overline a)=\theta$. Then $\rho(\sigma(\overline a))=\phi_{a_1}(\theta)$ and 
\[
P\circ \sigma\circ P^{-1}(\theta,a)=(\phi_{a_1}(\theta), \sigma(a)).
\]
Now $(\mathbb{RP}^1\times\Sigma,\nu,P\circ\sigma\circ P^{-1})$ is a factor (under the map $P$) of the ergodic system $(\Sigma^{\pm},\overline{\mu},\sigma)$ and since ergodicity is preserved under passing to factors we have the result.
\end{proof}

We stress that since $\overline{\mu}$ may not be a Bernoulli measure, the `past' $\cdots a_{-2}a_{-1}a_0$ and `future' $a_1a_2\cdots$ are not independent. However $\overline{\mu}$ is a Gibbs measure and as such has the quasi-Bernoulli property, that there is a constant $C>0$ such that
\[
\frac{1}{C}\,\overline{\mu}[a_{-n}\cdots a_k]\leq \overline{\mu}[a_{-n}\cdots a_0]\,\overline{\mu}[a_1\cdots a_k]\leq C\,\overline{\mu}[a_{-n}\cdots a_k].
\]
Then $\overline{\mu}$ is equivalent to the non-invariant measure $\widetilde{\mu}$ on $\Sigma^{\pm}$ given by
\[
\widetilde{\mu}[a_{-n}\cdots a_k]=\overline{\mu}[a_{-n}\cdots a_0]\,\overline{\mu}[a_1\cdots a_k],
\]
which has independent past and future.

By projecting, $\nu=\overline{\mu}\circ P^{-1}$ is equivalent to the product measure $\widetilde{\nu}=\widetilde{\mu}\circ P^{-1}=\mu_F\times\mu$ on $\mathbb{RP}^1\times\Sigma$. 
This skew product measure is easier to work with, and we use this equivalence in the proof of Lemma \ref{1d}

We now consider how projections of $E$ in different directions are related. When $E$ is a self-affine set with an invariant strong contracting foliation, such as in the case of Bedford-McMullen carpets, projections of $E$ in the strong contracting direction are self-similar sets. In our situation the projections are not self-similar but can be expressed in terms of projections in other directions.

\begin{lemma}
For each $\theta\in\mathbb{RP}^1$ and $i\in\{1,\cdots,m\}$ the map $f_{i,\theta}:[-1,1]\to[-1,1]$ given by
\begin{equation}\label{fith}
f_{i,\theta}=\pi_{\theta}\circ T_i\circ \pi_{\phi_i(\theta)}^{-1}
\end{equation}
is a well-defined affine map   such that $\pi_{\theta}(T_i(E))=f_{i,\theta}(\pi_{\phi_i(\theta)}(E))$.
\end{lemma}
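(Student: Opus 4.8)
The plan is to show that, once each projection $\pi_\theta$ is identified with a linear functional on $\mathbb R^2$, the composition $\pi_\theta\circ T_i$ depends on $x\in\mathbb R^2$ only through $\pi_{\phi_i(\theta)}(x)$, and does so affinely. Write $\theta^\perp$ for a fixed unit vector orthogonal to $\theta$; then, up to the identification of the diameter of $D$ with $[-1,1]$ (an affine, indeed isometric, bijection that does not affect affinity), we have $\pi_\theta(x)=\langle x,\theta^\perp\rangle$. Thus it suffices to produce scalars $c_i=c_i(\theta)$ and $b_i=b_i(\theta)$ with $\pi_\theta(T_ix)=c_i\,\pi_{\phi_i(\theta)}(x)+b_i$ for all $x\in\mathbb R^2$.

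First I would carry out the single relevant linear-algebra computation. Since $T_ix=A_ix+d_i$,
\[
\pi_\theta(T_ix)=\langle A_ix+d_i,\theta^\perp\rangle=\langle x,A_i^T\theta^\perp\rangle+\langle d_i,\theta^\perp\rangle.
\]
The key point is that $A_i^T\theta^\perp$ points in the direction $\phi_i(\theta)^\perp$. Indeed, recalling that $\phi_i(\theta)$ is the direction of $A_i^{-1}\theta$,
\[
\langle A_i^T\theta^\perp,\, A_i^{-1}\theta\rangle=\langle \theta^\perp,\, A_iA_i^{-1}\theta\rangle=\langle\theta^\perp,\theta\rangle=0,
\]
so $A_i^T\theta^\perp$ is orthogonal to $A_i^{-1}\theta$, hence parallel to $\phi_i(\theta)^\perp$. (Equivalently, in two dimensions $A_i^T\theta^\perp=\det(A_i)\,(A_i^{-1}\theta)^\perp$.) Writing $A_i^T\theta^\perp=c_i\,\phi_i(\theta)^\perp$ with $\phi_i(\theta)^\perp$ a unit vector, and $b_i=\langle d_i,\theta^\perp\rangle$, gives, for every $x\in\mathbb R^2$,
\[
\pi_\theta(T_ix)=c_i\,\langle x,\phi_i(\theta)^\perp\rangle+b_i=c_i\,\pi_{\phi_i(\theta)}(x)+b_i.
\]

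From this identity both assertions follow immediately. The right-hand side depends on $x$ only through $t:=\pi_{\phi_i(\theta)}(x)$, so although $\pi_{\phi_i(\theta)}^{-1}$ is a priori set-valued, the composition $\pi_\theta\circ T_i\circ\pi_{\phi_i(\theta)}^{-1}$ is single-valued on $\pi_{\phi_i(\theta)}(E)$ and there coincides with the genuine affine map $t\mapsto c_it+b_i$; this is $f_{i,\theta}$, which we extend affinely to all of $[-1,1]$. Applying the identity to each $x\in E$ and letting $x$ range over $E$ yields $\pi_\theta(T_i(E))=f_{i,\theta}(\pi_{\phi_i(\theta)}(E))$. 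Finally, since $T_i(D)\subset D$ and $\pi_{\phi_i(\theta)}(D)=[-1,1]=\pi_\theta(D)$, the same identity applied to $x\in D$ shows $f_{i,\theta}([-1,1])=\pi_\theta(T_i(D))\subset[-1,1]$, so $f_{i,\theta}$ is indeed a self-map of $[-1,1]$.

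There is no substantial obstacle here: the entire content is the single observation that $A_i^T\theta^\perp\parallel\phi_i(\theta)^\perp$, which is precisely what makes $\phi_i$ (built from $A_i^{-1}$) the correct map for transporting projection directions backwards under $T_i$. The only points requiring care are bookkeeping: the isometric identification with $[-1,1]$ and the sign ambiguity in the choice of $\theta^\perp$ merely rescale or reflect, affecting the values of $c_i$ and $b_i$ but never the affinity of $f_{i,\theta}$.
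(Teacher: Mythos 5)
Your proof is correct, and it takes a route dual to the paper's. The paper argues on the primal side, without coordinates: for $x\in[-1,1]$ the fibre $\pi_{\phi_i(\theta)}^{-1}(x)$ is a line parallel to $\phi_i(\theta)$, and since $A_i\phi_i(\theta)$ is parallel to $\theta$ (the defining property of $\phi_i$), $T_i$ carries that fibre into a fibre of $\pi_\theta$, which $\pi_\theta$ then collapses to a point; this gives well-definedness, affinity is inherited from $T_i$, and the image identity is established separately by a chain of equivalences, $x\in\pi_\theta(T_i(E))\iff T_i^{-1}(\pi_\theta^{-1}(x))\cap E\neq\emptyset\iff f_{i,\theta}^{-1}(x)\in\pi_{\phi_i(\theta)}(E)$. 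You work instead on the dual side: identifying $\pi_\theta$ with the functional $\langle\cdot\,,\theta^\perp\rangle$, the one-line computation $\langle A_i^T\theta^\perp,A_i^{-1}\theta\rangle=\langle\theta^\perp,\theta\rangle=0$ shows $A_i^T\theta^\perp\parallel\phi_i(\theta)^\perp$, hence $\pi_\theta\circ T_i=c_i\,\pi_{\phi_i(\theta)}+b_i$ identically on $\mathbb R^2$. That single identity delivers everything at once: well-definedness, affinity, the image identity (let $x$ range over $E$; no preimage chase is needed), and even the codomain claim $f_{i,\theta}([-1,1])\subseteq[-1,1]$, which the paper's proof never explicitly verifies. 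Your approach also produces the coefficients explicitly, $|c_i|=\|A_i^T\theta^\perp\|=|\det A_i|\,\|A_i^{-1}\theta\|$, i.e.\ the contraction ratio of $f_{i,\theta}$, which the paper's argument leaves implicit; conversely, the paper's fibre picture is the one it reuses later (Lemmas \ref{asymp} and \ref{lemB}) when it needs the exact spacing factor between parallel lines under $T_{a|_n}^{-1}$, so its geometric phrasing is not incidental. Both arguments are complete; the difference is an algebraic adjoint computation versus a geometric fibre-mapping argument.
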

 
\begin{proof}
If $x \in [-1,1]$ then $\pi_{\phi_i(\theta)}^{-1}(x)$ is a line parallel to $\phi_i(\theta)$, so 
$T_i\circ \pi_{\phi_i(\theta)}^{-1}(x)$ is a line parallel to $\theta$, by definition of $\phi_i$. Hence 
$f_{i,\theta}$ is well-defined and is affine since $T_i$ is affine.  Moreover,
$$x\in \pi_{\theta}(T_i(E))\iff  \pi_{\theta}^{-1}(x)\cap T_i(E)\neq \emptyset
\iff T_i^{-1}(\pi_{\theta}^{-1}(x))\cap E\neq \emptyset$$
$$\iff f_{i,\theta}^{-1}(x)=\pi_{\phi_i(\theta)} (T_i^{-1}(\pi_{\theta}^{-1}(x)))\in \pi_{\phi_i(\theta)}(E),$$
so $\pi_{\theta}(T_i(E))=f_{i,\theta}(\pi_{\phi_i(\theta)}(E))$ as $ f_{i,\theta}$ is affine.
\end{proof}

This allows us to deduce that the projections of $E$ form a `self-similar family' in the following sense.
\begin{prop}\label{ProjProp}
With $f_{i,\theta}$ given by \eqref{fith}, $\pi_{\theta}(E)=\bigcup_{i=1}^m f_{i,\theta}(\pi_{\phi_i(\theta)}(E))$ for all $\theta \in \mathbb{RP}^1$.
\end{prop}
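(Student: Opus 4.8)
The plan is to obtain this directly from the defining self-affine relation \eqref{ifs} together with the preceding lemma, so that almost no new work is required. The one substantive point — that $f_{i,\theta}$ is a well-defined affine map and that $\pi_\theta(T_i(E))=f_{i,\theta}(\pi_{\phi_i(\theta)}(E))$ — has already been established, so the proposition is essentially a matter of distributing the projection over a finite union.

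Concretely, I would start from $E=\bigcup_{i=1}^m T_i(E)$ and apply the projection $\pi_\theta$ to both sides. Since $\pi_\theta$ is a map on $E$ and the set-image of a union is the union of the set-images, we have
\[
\pi_\theta(E)=\pi_\theta\Big(\bigcup_{i=1}^m T_i(E)\Big)=\bigcup_{i=1}^m \pi_\theta\big(T_i(E)\big).
\]
The preceding lemma identifies each term as $\pi_\theta(T_i(E))=f_{i,\theta}(\pi_{\phi_i(\theta)}(E))$ with $f_{i,\theta}$ given by \eqref{fith}, and substituting this into the right-hand side yields $\pi_\theta(E)=\bigcup_{i=1}^m f_{i,\theta}(\pi_{\phi_i(\theta)}(E))$, as claimed. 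Since the stated identity $\pi_\theta(E_{a_1\cdots a_n})$-type decomposition holds for every $\theta\in\mathbb{RP}^1$ (the lemma imposes no restriction on $\theta$), the conclusion follows for all $\theta$.

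There is no real obstacle here: the commutation of $\pi_\theta$ with finite unions is automatic for set images, and the geometric content — that projecting the piece $T_i(E)$ in direction $\theta$ is the same as projecting $E$ in the pulled-back direction $\phi_i(\theta)$ and then applying an affine map of the line — is exactly what the previous lemma supplies. The only point worth emphasising in the write-up is that the decomposition is not self-similar in the usual sense: the maps $f_{i,\theta}$ act on $\pi_{\phi_i(\theta)}(E)$ rather than on $\pi_\theta(E)$ itself, so the family $\{\pi_\theta(E)\}_{\theta\in\mathbb{RP}^1}$ is only a self-similar \emph{family} indexed by $\theta$, with the index transforming under the projective maps $\phi_i$. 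This is precisely the structure that makes the Furstenberg measure $\mu_F$ the natural object governing the covering estimates later on.
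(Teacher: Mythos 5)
Your proof is correct and is essentially identical to the paper's own argument: both apply $\pi_\theta$ to the self-affine relation \eqref{ifs}, distribute the projection over the finite union, and invoke the preceding lemma to rewrite each $\pi_\theta(T_i(E))$ as $f_{i,\theta}(\pi_{\phi_i(\theta)}(E))$. Nothing further is needed.
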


\begin{proof}
From \eqref{ifs}, 
\be\label{ssproj}
\pi_{\theta}(E)= \bigcup_{i=1}^{m}\pi_{\theta}(T_i(E))
=\bigcup_{i=1}^m f_{i,\theta}(\pi_{\phi_i(\theta)}(E)).
\ee
\end{proof}

\begin{cor}\label{cordims}
The  dimensions $\dim_H \pi_{\theta}(E)$, ${\underline \dim}_B \pi_{\theta}(E)$ and 
${\overline \dim}_B \pi_{\theta}(E)$ are each constant for $\mu_F$-almost all $\theta\in \mathbb{RP}^1$.
\end{cor}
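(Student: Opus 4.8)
The plan is to combine the self‑similar structure of the projections from Proposition~\ref{ProjProp} with the ergodicity of the system in Lemma~\ref{ergodic}. Write $g(\theta)$ for any one of the three quantities $\dim_H\pi_\theta(E)$, $\underline{\dim}_B\pi_\theta(E)$, $\overline{\dim}_B\pi_\theta(E)$. The starting point is that each of these dimensions is monotone under inclusion and invariant under bi‑Lipschitz maps. The map $f_{i,\theta}$ is a non‑degenerate affine map of $[-1,1]$: its linear part is nonzero because $T_i$ is an affine bijection sending distinct lines parallel to $\phi_i(\theta)$ to distinct lines parallel to $\theta$, which $\pi_\theta$ then separates into distinct points. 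Hence $f_{i,\theta}$ is bi‑Lipschitz onto its image, and from the inclusion $\pi_\theta(E)\supseteq f_{i,\theta}\big(\pi_{\phi_i(\theta)}(E)\big)$ furnished by Proposition~\ref{ProjProp} I obtain the pointwise inequality
\[
g(\theta)\ \ge\ g(\phi_i(\theta))\qquad(1\le i\le m,\ \theta\in\mathbb{RP}^1).
\]
I emphasise that this inclusion argument needs only monotonicity and bi‑Lipschitz invariance, so it treats all three dimensions uniformly and sidesteps the fact that lower box dimension is not finitely stable.

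Next I transport this to the ergodic system. Define $G:\mathbb{RP}^1\times\Sigma\to[0,1]$ by $G(\theta,a):=g(\theta)$, a bounded function depending only on the first coordinate, and write $T:=P\circ\sigma\circ P^{-1}$, which by the proof of Lemma~\ref{ergodic} acts as $T(\theta,a)=(\phi_{a_1}(\theta),\sigma a)$. Taking $i=a_1$ in the displayed inequality gives $G\ge G\circ T$ everywhere. Since $\nu$ is $T$‑invariant and $G$ is bounded, $\int G\,d\nu=\int G\circ T\,d\nu$, so $\int (G-G\circ T)\,d\nu=0$ with a nonnegative integrand; therefore $G=G\circ T$ holds $\nu$‑almost everywhere. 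By the ergodicity of $(\mathbb{RP}^1\times\Sigma,\nu,T)$ from Lemma~\ref{ergodic}, the $T$‑invariant function $G$ equals a constant $\nu$‑almost everywhere. Finally, because $\nu$ is equivalent to $\mu_F\times\mu$ and $G$ depends only on $\theta$, this constancy descends to $\mu_F$‑almost every $\theta$; applying the argument to each of the three dimensions in turn proves the corollary.

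The step requiring care is the measurability of $g$, that is, that $\theta\mapsto\dim\pi_\theta(E)$ is a Borel function so that $G$ is genuinely $\nu$‑integrable and the invariance computation is legitimate. For the box dimensions this is transparent: covering $\pi_\theta(E)$ by the images under $\pi_\theta$ of the finitely many pieces $E_{a_1\cdots a_n}$ shows that the relevant covering numbers vary measurably (indeed semicontinuously) in $\theta$, so the associated $\liminf$ and $\limsup$ are measurable. For the Hausdorff dimension I would invoke the standard fact that $\theta\mapsto\dim_H\pi_\theta(E)$ is Borel. This measurability is the only non‑formal ingredient, and I expect it to be the main obstacle; the remainder is the soft principle that a bounded function which does not increase along an ergodic transformation must be almost everywhere constant.
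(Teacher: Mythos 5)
Your proposal is correct and takes essentially the same route as the paper: the pointwise inequality $\dim \pi_{\theta}(E)\geq \dim\pi_{\phi_i(\theta)}(E)$ extracted from Proposition~\ref{ProjProp}, followed by an appeal to ergodicity. The paper compresses the second step into the single phrase ``the conclusion follows since $\mu_F$ is ergodic''; your expansion of that phrase---passing to the skew product of Lemma~\ref{ergodic}, using $T$-invariance of $\nu$ to upgrade the sub-invariance $G\geq G\circ T$ to $\nu$-a.e.\ invariance and hence a.e.\ constancy, then descending to $\mu_F$ via the equivalence $\nu\sim\mu_F\times\mu$, together with the measurability caveat---is exactly the detail the paper leaves implicit.
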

\begin{proof}
From \eqref{ssproj}, $\dim_H \pi_{\theta}(E)\geq \dim_H \pi_{\phi_i(\theta)}(E)$ for all $\theta\in \mathbb{RP}^1$ and $1 \leq i \leq m$. 
The conclusion follows since $\mu_F$ is ergodic.
\end{proof}

\section{Main proofs }
\setcounter{equation}{0}
\setcounter{theo}{0}
\subsection{Proof of Theorem \ref{HDThm}}

We shall study Hausdorff dimension  by relating the  local dimension of $\mu$ to the local dimension of its images under projection and then estimating the local dimension of these  images. Recall that the {\it local dimension} $\dim_{\rm loc}(\mu,x)$ of $\mu$ at $x$ is given by

\[
\dim_{\rm loc}(\mu,x):=\lim_{\delta\to 0}\frac{\log\mu(B(x,\delta))}{\log\delta}
\]
provided that the limit exists.

The next two lemmas enable us to relate the measures of small balls centred at points in $E$ to the measures of certain slices of the whole of $E$. 
We write $a_n\asymp b_n$ to mean that there is a constant $C$ such that
$a_n/C\leq b_n\leq C a_n$
for all $n\in\mathbb N$ and for any further uniformity specified.

\begin{lemma}\label{equiv}
It is the case that
 \begin{equation}\label{mub}
\mu[a|_n]\,\mu(T_{a|_n}^{-1}(A))\asymp \mu(A)
\end{equation}
for all $a\in \Sigma$, all  $n\geq 0$ and all Borel sets $A\subset E_{a|n}$.
\end{lemma}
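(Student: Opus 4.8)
The plan is to translate the geometric statement into symbolic dynamics and then reduce it to the quasi-Bernoulli (Gibbs) property of $\mu$. Since the strong separation condition makes $\pi:\Sigma\to E$ a bijection, I identify $A\subset E_{a|n}$ with the Borel set $B:=\pi^{-1}(A)\subset[a|_n]$, so that $\mu(A)=\mu(B)$. The key observation is that on $E_{a|n}=T_{a|_n}(E)$ the inverse map $T_{a|_n}^{-1}$ corresponds, under $\pi$, exactly to the shift: if $x\in E_{a|n}$ has code $a_1\cdots a_n b_1b_2\cdots$, then $T_{a|_n}^{-1}(x)=\pi(\sigma^n(\pi^{-1}(x)))$. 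Moreover $\sigma^n$ restricted to $[a|_n]$ is a bijection onto $\Sigma$ (it simply deletes the fixed initial block $a_1\cdots a_n$). Hence $\mu\big(T_{a|_n}^{-1}(A)\big)=\mu(\sigma^n B)$, and the lemma becomes the purely symbolic assertion that $\mu[a|_n]\,\mu(\sigma^n B)\asymp\mu(B)$ for every Borel $B\subset[a|_n]$, uniformly in $a$ and $n$. (The case $n=0$ is trivial, with $\mu[a|_0]=1$ and $T_{a|_0}$ the identity.)

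Next I verify the claim for cylinders, where it is exactly quasi-Bernoulliness. Taking $B=[a|_n\,c]$ for a finite word $c$, we have $\sigma^n B=[c]$ and $\mu(B)=\mu[a|_n c]$, where $a|_n c$ denotes the concatenation. Since $\mu$ is a Gibbs measure it has the quasi-Bernoulli property, inherited from the two-sided statement already recorded for $\overline\mu$: there is a single constant $C>0$, independent of the words, with $C^{-1}\mu[uv]\le\mu[u]\,\mu[v]\le C\mu[uv]$ for all finite words $u,v$. Applying this with $u=a|_n$ and $v=c$ yields $C^{-1}\mu[a|_n c]\le\mu[a|_n]\,\mu[c]\le C\mu[a|_n c]$, which is precisely $\mu[a|_n]\,\mu(\sigma^n B)\asymp\mu(B)$ for cylinders, with the uniform constant $C$.

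Finally I extend the comparison from cylinders to arbitrary Borel sets. Set $\nu_1(B)=\mu(B)$ and $\nu_2(B)=\mu[a|_n]\,\mu(\sigma^n B)$; these are finite Borel measures on the compact space $[a|_n]$ which agree up to the factor $C$ on all cylinders. Because every relatively open subset of $[a|_n]$ is a countable \emph{disjoint} union of cylinders $[a|_n c_j]$, countable additivity of both $\nu_1,\nu_2$ together with the cylinder estimate gives $C^{-1}\nu_1(U)\le\nu_2(U)\le C\nu_1(U)$ for all open $U$. Outer regularity of the two finite Borel measures on a compact metric space then promotes this to $C^{-1}\nu_1(B)\le\nu_2(B)\le C\nu_1(B)$ for every Borel set $B$: approximating $B$ from above by open sets in $\nu_1$-measure gives the upper bound, and approximating from above in $\nu_2$-measure gives the lower bound. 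This establishes \eqref{mub}.

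The step I expect to require the most care is ensuring that the comparison constant is genuinely \emph{uniform} over all $a\in\Sigma$ and all $n$, rather than the statement merely holding for each fixed $(a,n)$ with a constant that might degrade; this uniformity is exactly what the single Gibbs constant $C$ provides, and it is what makes the extension to Borel sets (via the same fixed $C$) legitimate. The only other point needing verification is the identification $T_{a|_n}^{-1}=\pi\circ\sigma^n\circ\pi^{-1}$ on $E_{a|n}$ together with the bijectivity of $\sigma^n$ on $[a|_n]$, both of which follow directly from the strong separation condition and the definition of the coding map $\pi$.
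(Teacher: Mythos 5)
Your proof is correct and follows essentially the same route as the paper's: the core step in both is the quasi-Bernoulli (Gibbs) property of $\mu$ applied to cylinders $[a|_n c]$, followed by an approximation argument to pass from cylinders to general Borel subsets of $E_{a|n}$. Your extension step --- comparing the two finite measures $\nu_1$ and $\nu_2$ on open sets (as countable disjoint unions of cylinders) and then invoking outer regularity separately for each measure --- is in fact a more careful rendering of the paper's one-line appeal to approximation in measure by disjoint unions of basic sets.
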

\begin{proof}

As $\mu$ is a Gibbs measure it is quasi-Bernoulli, so
\begin{align*}
 \mu(E_{a_1\cdots a_{n+k}}) &=\mu[a_1\cdots a_{n+k}]\\
 &\asymp \mu[a_1\cdots a_n]\, \mu[a_{n+1}\cdots a_{n+k}]\\
 &= \mu[a_1\cdots a_n]\, \mu(E_{a_{n+1}\cdots a_{n+k}})\\
 &= \mu[a_1\cdots a_n]\, \mu(T_{a|_n}^{-1}E_{a_1\cdots a_{n+k}})
\end{align*}
with the implied constant uniform over ${a}, n$ and $k$.
A Borel set $A\subseteq E_{a|n}$ can be approximated arbitrarily closely in measure by a disjoint union of basic sets $E_{a_1\cdots a_{n+k}}$ with $k\geq 0$, each of which is a subset of $E_{a_1\cdots a_n}$. The conclusion follows by summing the measures of these sets and those of their images under $T_{a|_n}^{-1}$.
\end{proof}

To compare local dimensions, we compare the measures of balls with the projected measures of intervals. Note that 
$\alpha_2(a|_n)/\alpha_1(a|_n)  \to 0$ uniformly in $a\in \Sigma$ as $n \to \infty$.  To see this, since the $A_i$ are linear and map the first quadrant strictly into its interior, we can find $\lambda < 1$ so that each $A_i $ contracts angles between lines in the first quadrant by  $\lambda$ or less. Thus the image of the unit square under $A_{i_1}\cdots A_{i_n}$ is a parallelogram with one angle at most $(\pi/2)\lambda^n$, so that the ratio of the width to the diameter of such a parallelogram, which by basic trigonometry is at least  $\alpha_2(a|_n)/\alpha_1(a|_n)$, is at most $(\pi/2)\lambda^n$.

\begin{lemma}\label{asymp}
Let $V$ be a strict subset of ${\rm int}\mathcal Q_2$ such that $\phi_i : V \to  {\rm int}V$ for all $i$. Then there are numbers  $C>0$ and $0< \rho_1< \rho_2$ such that for each $a\in\Sigma, n\in\mathbb N$ and $\theta \in V$,
\begin{align}
&C^{-1}\mu\big(B(\pi(a),  \rho_1\alpha_2(a|_n))\big)\nonumber\\
&\leq \mu\big[a|_n\big]\,
\mu_{\phi_{a_n\cdots a_1}(\theta)}\bigg[\pi_{\phi_{a_n\cdots a_1}(\theta)}(\sigma^n({a}))-\frac{\alpha_2(a|_n)}{\alpha_1(a|_n)}, \pi_{\phi_{a_n\cdots a_1}(\theta)}(\sigma^n({a}))+\frac{\alpha_2(a|_n)}{\alpha_1(a|_n)}\bigg]\nonumber\\
&\leq C\mu\big(B(\pi(a),  \rho_2 \alpha_2(a|_n))\big)\label{equiv1}.
\end{align}
\end{lemma}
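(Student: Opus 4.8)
The plan is to compare, via Lemma \ref{equiv}, the ball $B(\pi(a),\rho\alpha_2(a|_n))$ with the $T_{a|_n}$-image of the slab that the projection in \eqref{equiv1} cuts out of $E$. Throughout write $\alpha_i=\alpha_i(a|_n)$, set $\theta_n:=\phi_{a_n\cdots a_1}(\theta)$ and let
\[
I:=\big[\pi_{\theta_n}(\sigma^n(a))-\tfrac{\alpha_2}{\alpha_1},\ \pi_{\theta_n}(\sigma^n(a))+\tfrac{\alpha_2}{\alpha_1}\big],
\]
so the middle quantity in \eqref{equiv1} is $\mu[a|_n]\,\mu_{\theta_n}(I)=\mu[a|_n]\,\mu\big(\pi_{\theta_n}^{-1}(I)\cap E\big)$. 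Since $\pi_{\theta_n}^{-1}(I)\cap E\subseteq E$ and its $T_{a|_n}$-image lies in $E_{a|_n}$, Lemma \ref{equiv} gives $\mu[a|_n]\,\mu_{\theta_n}(I)\asymp\mu(S\cap E_{a|_n})$, where $S:=T_{a|_n}(\pi_{\theta_n}^{-1}(I))$. It therefore suffices to sandwich $\mu(S\cap E_{a|_n})$ between constant multiples of $\mu(B(\pi(a),\rho_1\alpha_2))$ and $\mu(B(\pi(a),\rho_2\alpha_2))$, and I would do this by comparing the sets themselves.

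First I would record the geometry of $S$. Since $\phi_{a_n\cdots a_1}$ is built from $A_{a|_n}^{-1}$, the line $\pi_{\theta_n}^{-1}(x)$ is parallel to $\theta_n$, the matrix $A_{a|_n}$ maps direction $\theta_n$ to direction $\theta$, and $T_{a|_n}(\pi(\sigma^n(a)))=\pi(a)$; hence $S$ is a strip parallel to $\theta$ whose centre line passes through $\pi(a)$. Because $\theta\in V\subset\mathrm{int}\,\mathcal Q_2$ while, by the positivity of the $A_i$ and the angle-contraction already noted, the major axis direction of $A_{a|_n}$ lies in a fixed compact set of directions in the open first quadrant, the angle between $\theta$ and this major axis is bounded below by some $\beta_0>0$, uniformly in $a,n,\theta$. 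This transversality yields $|A_{a|_n}^{-1}\theta|\asymp\alpha_2^{-1}$, so $|A_{a|_n}\theta_n|=|A_{a|_n}^{-1}\theta|^{-1}\asymp\alpha_2$; combined with $|\det A_{a|_n}|=\alpha_1\alpha_2$ and the elementary identity $|\langle\theta^\perp,A_{a|_n}\theta_n^\perp\rangle|=|\det A_{a|_n}|/|A_{a|_n}\theta_n|$, the width of $S$ is $\asymp |I|\,|\det A_{a|_n}|/|A_{a|_n}\theta_n|\asymp(\alpha_2/\alpha_1)(\alpha_1\alpha_2)/\alpha_2=\alpha_2$. Thus $S$ is a strip of width $\asymp\alpha_2$, parallel to $\theta$ and transverse by angle $\ge\beta_0$ to the elongated ellipse $D_{a|_n}\supseteq E_{a|_n}$ of semi-axes $\alpha_1\ge\alpha_2$.

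For the upper bound I would use that $S$ crosses the thin ellipse transversally: any point of $S\cap D_{a|_n}$ differs from $\pi(a)$ by at most $\asymp\alpha_2$ along $\theta$ (a larger displacement would leave the ellipse, whose extent perpendicular to its major axis is $\alpha_2$) and by at most the strip half-width $\asymp\alpha_2$ across $\theta$. Hence $S\cap E_{a|_n}\subseteq B(\pi(a),\rho_2\alpha_2)$ for a suitable fixed $\rho_2$, giving $\mu(S\cap E_{a|_n})\le\mu(B(\pi(a),\rho_2\alpha_2))$. For the lower bound I would first confine the ball to the single cylinder $E_{a|_n}$: with $\delta_0>0$ the strong separation constant at the top level and $c:=\max_i\|A_i\|_2<1$, the estimate $\alpha_2(a|_n)\le\alpha_2(a|_{n-1})\|A_{a_n}\|_2\le c\,\alpha_2(a|_{n-1})$ together with the contraction bound $|A_{a|_{n-1}}(x-y)|\ge\alpha_2(a|_{n-1})|x-y|$ shows that every level-$n$ cylinder other than $E_{a|_n}$ lies at distance $\ge\delta_0\,\alpha_2(a|_{n-1})\ge(\delta_0/c)\,\alpha_2$ from $\pi(a)$. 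So for $\rho_1$ small enough $B(\pi(a),\rho_1\alpha_2)\cap E\subseteq E_{a|_n}$, and, shrinking $\rho_1$ further, also $B(\pi(a),\rho_1\alpha_2)\subseteq S$ since $\pi(a)$ lies on the centre line of $S$ and $\rho_1\alpha_2$ is below its half-width. Consequently $B(\pi(a),\rho_1\alpha_2)\cap E\subseteq S\cap E_{a|_n}$ and $\mu(B(\pi(a),\rho_1\alpha_2))\le\mu(S\cap E_{a|_n})$.

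The main obstacle is the lower bound, where one must rule out mass that the ball could collect from neighbouring level-$n$ cylinders; this is exactly what the separation estimate $\ge(\delta_0/c)\alpha_2(a|_n)$ achieves, and it is the step where both the strong separation condition and the positivity of the matrices (through $\alpha_2(a|_n)\le c\,\alpha_2(a|_{n-1})$ and the consequent elongation of the cylinders) are essential. The remaining verifications are routine: the uniform transversality of $\theta$ to the major axis, and the resulting asymptotics $|A_{a|_n}^{-1}\theta|\asymp\alpha_2^{-1}$ and $\mathrm{width}(S)\asymp\alpha_2$, all uniform in $a\in\Sigma$, $n\in\mathbb N$ and $\theta\in V$.
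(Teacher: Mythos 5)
Your proof is correct and takes essentially the same route as the paper: both apply Lemma \ref{equiv} to the set $T_{a|_n}\big(\pi_{\phi_{a_n\cdots a_1}(\theta)}^{-1}(I)\cap E\big)$ and then sandwich this slab image --- a strip of width $\asymp\alpha_2(a|_n)$ through $\pi(a)$, uniformly transverse to the elongated ellipse $T_{a|_n}(D)$ --- between balls of radii $\rho_1\alpha_2(a|_n)$ and $\rho_2\alpha_2(a|_n)$, using the strong separation constant to confine the small ball to the cylinder $E_{a|_n}$. The differences are presentational: you obtain the width and transversality estimates from $|A_{a|_n}^{-1}\theta|\asymp\alpha_2(a|_n)^{-1}$ and a determinant identity, whereas the paper uses the equivalent explicit factor $\big(\alpha_1^2\cos^2\tau+\alpha_2^2\sin^2\tau\big)^{1/2}$ with $\tau$ the angle between $\theta$ and the minor axis of $T_{a|_n}(D)$.
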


\begin{proof} 
Consider the slice  $S$ of the unit disc $D$ given by
$$S=\pi_{\phi_{a_n\cdots a_1}(\theta)}^{-1}\bigg[\pi_{\phi_{a_n\cdots a_1}(\theta)}(\sigma^n({a}))-\frac{\alpha_2(a|_n)}{\alpha_1(a|_n)}, \pi_{\phi_{a_n\cdots a_1}(\theta)}(\sigma^n({a}))+\frac{\alpha_2(a|_n)}{\alpha_1(a|_n)}\bigg],$$
which has side in direction $\phi_{a_n\cdots a_1}(\theta)$. The linear map $T_{a|_n}^{-1}:T_{a|_n}(D)\to D$ maps straight lines in direction $\theta$ to lines 
in direction $\phi_{a_n\cdots a_1}(\theta)$, scaling the spacing between such parallel lines by a factor 
$$ \big(\alpha_1(a|_n)^2\cos^2 \tau+ \alpha_2(a|_n)^2 \sin^2\tau\big)^{-1/2}
$$
where $\tau$ is the angle between  $\theta$ and the minor axis direction $\theta_{a_1\cdots a_n}$ of the ellipse $T_{a|_n}(D)$, using elementary geometry.
Then $T_{a|_n}D$ is an ellipse with major axis of length $2\alpha_1(a|_n)$ and minor axis of length $2\alpha_2(a|_n)$, and $T_{a|_n}S$ is a slice of this ellipse of width 
$$ \frac{\alpha_2(a|_n)}{\alpha_1(a|_n)}\big(\alpha_1(a|_n)^2\cos^2 \tau+ \alpha_2(a|_n)^2 \sin^2\tau\big)^{1/2}$$
and making an angle $\tau$ with the minor axis direction.
Writing $\tau_V$ for $|V|$, that is the angular range of $V$, we have $0\leq |\tau| \leq \tau_V<\frac{\pi}{2}$ and also that the width of the slice $T_{a|_n}S$ is at least
$\alpha_2(a|_n) \cos \tau_V$ and at most $\alpha_2(a|_n)$.  It follows using compactness that there are numbers $0< \rho_1\leq \rho_2$ independent of $a\in\Sigma, n\in\mathbb N$ and $\theta \in V$, such that 
\begin{equation}\label{mes}
B\big(\pi(a),\rho_1 \alpha_2(a|_n)\big) \subset  T_{a|_n}S \subset  B\big(\pi(a),\rho_2 \alpha_2(a|_n)\big).
\end{equation}
We may certainly choose $0< \rho_1< d$ where $d$ is the minimal  separation between the $\{T_iE\}_{i=1}^m$ given by the strong separation condition. This ensures that 
\begin{equation}\label{mes1}
B\big(\pi(a),\rho_1 \alpha_2(a|_n)\big)\cap E \subset T_{a|_n}E,
\end{equation}
 since  if $a,a' \in \Sigma$ with $a|_n =a'|_n$ but  $a|_{n+1} \neq a'|_{n+1}$ then $|\pi(a) - \pi(a')| \geq d \alpha_2(a|_n)$. 

Since $\mu$ is supported by $E$, \eqref{mes} and  \eqref{mes1} give
$$\mu\big(B(\pi(a), \rho_1\alpha_2(a|_n))\big) \leq \mu(T_{a|_n}(E \cap S)) \leq \mu\big(B(\pi(a), \rho_2\alpha_2(a|_n))\big).$$
Taking $A=T_{a|_n}(E \cap S)$ in  Lemma \ref{equiv},  
$$\mu(T_{a|_n}(E \cap S))\asymp \mu[a|_n] \,\mu(E \cap S)= \mu[a|_n]\,
\mu_{\phi_{a_n\cdots a_1}(\theta)}(S),$$ 
so \eqref{equiv1} follows.
\end{proof}

From this lemma we see that, in order to estimate the local dimension of $\mu$ at $a$ we need only estimate the local dimension of the projected measure $\mu_{\phi_{a_n\cdots a_1}\theta}$ at $\pi_{\phi_{a_n\cdots a_1}\theta}(\sigma^n({a}))$.
Thus we work with the approximate local dimensions of the previous lemma, let
\[
d(\theta,{a},n):=\frac{\log\,\mu_{\phi_{a_n\cdots a_1}(\theta)}\!\Big(B(\pi_{\phi_{a_n\cdots a_1}(\theta)}(\sigma^n({a})),\frac{\alpha_2(a|_n)}{\alpha_1(a|_n)})\Big) }{\log\frac{\alpha_2(a|_n)}{\alpha_1(a|_n)}}.
\]

\begin{lemma}\label{1d} For $\nu$-almost every pair $(\theta,a)\in\mathbb{PR}^1\times\Sigma$ and for all $\epsilon>0$, the set
\[G(\theta,a,\epsilon):=\{n\in\mathbb N:\left|d(\theta,{a},n)-1\right|<\epsilon\big\}
\]
satisfies
\[
\lim_{N \to \infty}\frac{1}{N}|G(\theta,a,\epsilon)\cap\{1,\cdots N\}|=1.
\]
\end{lemma}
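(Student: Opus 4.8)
The plan is to realise $d(\theta,a,n)$ as the value of a single, time-independent family of functions along the orbit of the ergodic system of Lemma \ref{ergodic}, and then to average using Birkhoff's theorem. Writing $T:=P\circ\sigma\circ P^{-1}$, recall that $T^n(\theta,a)=(\phi_{a_n\cdots a_1}(\theta),\sigma^n(a))$, so that if we set
\[
\Phi_\delta(\vartheta,b):=\frac{\log\mu_\vartheta\big(B(\pi_\vartheta(b),\delta)\big)}{\log\delta}\qquad(\delta>0),
\]
then $d(\theta,a,n)=\Phi_{r_n}\big(T^n(\theta,a)\big)$ with $r_n:=\alpha_2(a|_n)/\alpha_1(a|_n)$. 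The one genuine complication is that the scale $r_n$ varies with $n$ and depends on the suppressed `past' $a|_n$, so $n\mapsto d(\theta,a,n)$ is \emph{not} of the form $f\circ T^n$ for a fixed $f$; handling this is the crux of the argument.

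First I would record the pointwise input coming from the hypothesis of Theorem \ref{HDThm}. For $\mu_F$-almost every $\vartheta$ the projection $\mu_\vartheta$ is absolutely continuous, and for such a measure on $[-1,1]$ the Lebesgue density theorem gives $\mu_\vartheta(B(x,\delta))/(2\delta)\to f(x)$ with $0<f(x)<\infty$ at $\mu_\vartheta$-almost every $x$ (here $f$ is the density), whence $\Phi_\delta(\vartheta,b)\to 1$ as $\delta\to0$ for $\mu$-almost every $b$, recalling that $\pi_\vartheta$ pushes $\mu$ forward to $\mu_\vartheta$. By Fubini this convergence holds $(\mu_F\times\mu)$-almost everywhere, and since $\nu$ is equivalent to the product measure $\mu_F\times\mu$, it holds $\nu$-almost everywhere as well.

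The device to circumvent the varying scale is to pass to a supremum over small scales. Fixing $\epsilon>0$ and a sequence $\delta_J\downarrow0$, set
\[
F_J(\vartheta,b):=\sup_{0<\delta\le\delta_J}\mathbf 1\big[\,|\Phi_\delta(\vartheta,b)-1|\ge\epsilon\,\big],
\]
the supremum being restricted to rational $\delta$ by monotonicity of $\delta\mapsto\mu_\vartheta(B(x,\delta))$, which disposes of measurability. The sets $\{F_J=1\}$ decrease with $J$ and, by the previous paragraph, have $\nu$-null intersection, so $\nu(\{F_J=1\})\to0$ as $J\to\infty$. Here the uniform decay of scales enters: since $\alpha_2(a|_n)/\alpha_1(a|_n)\le(\pi/2)\lambda^n$ uniformly in $a$, for each $J$ there is $n_0(J)$, \emph{independent of} $(\theta,a)$, with $r_n\le\delta_J$ for all $n\ge n_0(J)$; for such $n$ the scale $r_n$ is admissible in the supremum defining $F_J$ at the point $T^n(\theta,a)$, so that $\mathbf 1[\,|d(\theta,a,n)-1|\ge\epsilon\,]\le F_J(T^n(\theta,a))$.

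Finally I would apply Birkhoff's ergodic theorem. By Lemma \ref{ergodic} the system $(\mathbb{RP}^1\times\Sigma,\nu,T)$ is ergodic, so for $\nu$-almost every $(\theta,a)$, simultaneously for all $J$ in a countable family, $\frac1N\sum_{n=1}^N F_J(T^n(\theta,a))\to\nu(\{F_J=1\})$. Combining this with the bound above and discarding the $O(n_0(J))$ initial terms gives, for $\nu$-almost every $(\theta,a)$,
\[
\limsup_{N\to\infty}\frac1N\big|\{1\le n\le N:|d(\theta,a,n)-1|\ge\epsilon\}\big|\le\nu(\{F_J=1\})
\]
for every $J$; letting $J\to\infty$ forces this upper density to $0$, so the complementary set $G(\theta,a,\epsilon)$ has density $1$, and intersecting over a countable sequence $\epsilon\downarrow0$ yields the statement for all $\epsilon>0$ at once. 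The main obstacle is exactly the dependence of the scale $r_n$ on $n$ and on the past, which blocks a direct single application of Birkhoff; the supremum-over-scales function $F_J$, together with the uniform exponential decay of $\alpha_2/\alpha_1$, is what allows one fixed function to control all the varying scales simultaneously.
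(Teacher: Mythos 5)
Your proof is correct and takes essentially the same approach as the paper: your set $\{F_J=1\}$ is exactly the complement of the paper's set $G_{\kappa,\epsilon}$ (with $\kappa=\delta_J$), and both arguments combine the a.e.\ local dimension $1$ of the projections coming from absolute continuity, the equivalence of $\nu$ with $\mu_F\times\mu$, the decay of $\alpha_2(a|_n)/\alpha_1(a|_n)$ to bring the varying scale $r_n$ below the cutoff, and the Birkhoff ergodic theorem for the system of Lemma \ref{ergodic}, followed by letting the cutoff tend to zero. The only cosmetic difference is that you bound the upper density of the bad set while the paper bounds the lower density of the good set.
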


\begin{proof}
By the assumption of Theorem \ref{HDThm}, for $\mu_F$-almost every $\theta\in \mathbb{RP}^1$ the projected measure $\pi_{\theta}(\mu)$ is absolutely continuous. Then for $\widetilde{\nu}=\mu_F\times\mu$ -almost every pair $(\theta, a)\in \mathbb{RP}^1\times\Sigma$ the Radon-Nikodym derivative of $\mu_{\theta}$ exists and is positive at $\pi_{\theta}(a)$. By the comment on equivalence after the proof of Lemma \ref{ergodic}, this statement also holds for $\nu$-almost every pair $(\theta,a)$. For such pairs $(\theta,a)$
\begin{equation}\label{danlim}
\lim_{r\to 0} \frac{\log\mu_{\theta}(B(\pi_{\theta}(a),r))}{\log r}=1.
\end{equation}
Given $\kappa,\epsilon>0$ let
\[
G_{\kappa,\epsilon}:=\bigg\{(\theta,a):\bigg|\frac{\log\mu_{\theta}(B(\pi_{\theta}(a),r))}{\log r}-1\bigg|<\epsilon \text{ for all } r<\kappa\bigg\} 
\]
Then by \eqref{danlim}, for all $\epsilon>0$
\[
\lim_{\kappa\to 0}\nu(G_{\kappa,\epsilon})=1.
\]
For all $a\in\Sigma$, $\kappa>0$ there exists $N_0\in\mathbb N$ such that
\[
\frac{\alpha_2(a|_n)}{\alpha_1(a|_n)}<\kappa
\]
for all $n>N_0$.

For each $\delta>0$ we may choose $\kappa>0$ such that $\nu(G_{\kappa,\epsilon})>1-\delta.$ Then, recalling that
\[
(P\circ\sigma\circ P^{-1})^n(\theta,a)=(\phi_{a_n\cdots a_1}(\theta), \sigma^n(a))
\]
and that the system $(\mathbb{PR}^1\times\Sigma,\nu,P\circ\sigma\circ P^{-1})$ is ergodic, we see by the ergodic theorem applied to the characteristic function of $G_{\kappa,\epsilon}$ that for $\nu$-almost every $(\theta,a)$,
\begin{align*}
\lim_{N \to \infty}\frac{1}{N}|&G(\theta,a,\epsilon)\cap\{1,\cdots N\}|\\
&=\lim_{N \to \infty}\frac{1}{N}|\{n\in\{1,\cdots N\}: |d(\theta,a,n)-1|<\epsilon\}|\\
&\geq \lim_{N \to \infty}\frac{1}{N}\Big|\big\{n\in\{1,\cdots N\}:\frac{\alpha_2(a|_n)}{\alpha_1(a|_n)}<\kappa,(P\circ\sigma\circ P^{-1})^n(\theta,a)\in G_{\kappa,\epsilon}\big\}\Big|\\
&=\mu(G_{\kappa,\epsilon})>1-\delta.
\end{align*}
Since $\delta$ is arbitrary this completes the proof.

\end{proof}

We can now complete the proof of Theorem \ref{HDThm}. First we have a proposition.
\begin{prop}\label{prop4.4}
For $\mu$-almost every $a\in\Sigma$  and for all $\epsilon>0$ we have
\begin{equation}\label{dens}
\lim_{N\to\infty} \frac{1}{N}\bigg|\{n\in\{1,\cdots,N\}:\bigg|\frac{\log\mu\big(B(\pi({a}),\alpha_2(a|_n))\big)}{\log(\alpha_2(a|_n))}-D(\mu)\bigg|>\epsilon\}\bigg|=0,
\end{equation}
where $D(\mu)$ is the Lyapunov dimension \eqref{lyapdim}.
\end{prop}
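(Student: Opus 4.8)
The plan is to combine Lemma \ref{asymp}, which expresses $\mu\big(B(\pi(a),\alpha_2(a|_n))\big)$ in terms of $\mu[a|_n]$ and a projected measure, with the density-1 estimate of Lemma \ref{1d} for that projected measure, and then to evaluate the resulting quotient using the pointwise convergence of normalised entropy and singular values. Throughout, fix a compact $V\subset\mathrm{int}\,\mathcal Q_2$ with $\phi_i(V)\subset\mathrm{int}\,V$ and $\mu_F(V)=1$, which exists because the $\phi_i$ strictly contract $\mathcal Q_2$.

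The first and crucial step is to decouple the direction $\theta$ from the code $a$. The quantity to be controlled,
\[
L(a,n):=\frac{\log\mu\big(B(\pi(a),\alpha_2(a|_n))\big)}{\log\alpha_2(a|_n)},
\]
does not depend on $\theta$, whereas Lemma \ref{asymp} squeezes the $\theta$-dependent product $\mu[a|_n]\,\mu_{\phi_{a_n\cdots a_1}(\theta)}(\cdot)$ between the two $\theta$-free ball measures $\mu\big(B(\pi(a),\rho_1\alpha_2(a|_n))\big)$ and $\mu\big(B(\pi(a),\rho_2\alpha_2(a|_n))\big)$. Hence it suffices to produce, for $\mu$-almost every $a$, a \emph{single} direction $\theta=\theta(a)\in V$ for which the density-1 conclusion of Lemma \ref{1d} holds. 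Since Lemma \ref{1d} holds for $\nu$-almost every pair and $\nu$ is equivalent to the product $\mu_F\times\mu$ (the equivalence recorded in Section \ref{secfur}), Fubini's theorem shows that for $\mu$-almost every $a$ the set of admissible $\theta$ has full $\mu_F$-measure, so it meets $V$; fix one such $\theta$.

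Next I would invoke the pointwise asymptotics available for the ergodic, shift-invariant K\"aenm\"aki measure. The Shannon--McMillan--Breiman theorem gives $\tfrac1n\log\mu[a|_n]\to -h(\mu)$, while \eqref{lyap} gives $\tfrac1n\log\alpha_i(a|_n)\to\lambda_i(\mu)$; both are genuine limits and so hold for all large $n$ off a $\mu$-null set. Intersecting this cofinite set of indices with the density-1 set $G(\theta,a,\epsilon)$ of Lemma \ref{1d} leaves a set of $n$ of density $1$. For such $n$ Lemma \ref{1d} yields $\log\mu_{\phi_{a_n\cdots a_1}(\theta)}(\cdot)=(1+O(\epsilon))\log\big(\alpha_2(a|_n)/\alpha_1(a|_n)\big)$, and substituting this into the two-sided bound of Lemma \ref{asymp}, taking logarithms and dividing by $\log\alpha_2(a|_n)\to-\infty$ (so that the fixed constants $C,\rho_1,\rho_2$ contribute only $o(1)$, and monotonicity of $r\mapsto\mu(B(\pi(a),r))$ lets one pass from the radii $\rho_j\alpha_2(a|_n)$ to $\alpha_2(a|_n)$) gives
\[
L(a,n)=\frac{-h(\mu)+(1+O(\epsilon))\big(\lambda_2(\mu)-\lambda_1(\mu)\big)}{\lambda_2(\mu)}+o(1)=1+\frac{h(\mu)+\lambda_1(\mu)}{-\lambda_2(\mu)}+O(\epsilon)+o(1).
\]
In the setting under consideration $D(\mu)=\dim_A E\geq1$, placing us in the second branch of \eqref{lyapdim}, whose value is precisely $D(\mu)$. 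Letting $\epsilon\downarrow0$ then forces $L(a,n)$ into any prescribed neighbourhood of $D(\mu)$ along a set of $n$ of density $1$, which is the assertion \eqref{dens}.

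I expect the main obstacle to be exactly the decoupling of the second paragraph: transferring Lemma \ref{1d}, which is naturally a statement about $\nu$-typical pairs $(\theta,a)$ on $\mathbb{RP}^1\times\Sigma$, into a statement about $\mu$-typical codes $a$ alone. The resolution rests on the twin facts that $L(a,n)$ is independent of $\theta$ while the projected measure in Lemma \ref{asymp} is trapped between $\theta$-free ball measures, and that $\nu\sim\mu_F\times\mu$ so that Fubini applies. A secondary, purely technical point is the bookkeeping of the fixed multiplicative constants and the passage from radii $\rho_1\alpha_2(a|_n),\rho_2\alpha_2(a|_n)$ to the exact radius $\alpha_2(a|_n)$, handled by monotonicity of the measure of balls together with $\log\alpha_2(a|_n)\to-\infty$.
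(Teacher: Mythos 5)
Your proposal is correct and follows essentially the same route as the paper's proof: it combines Lemma \ref{asymp} with the density-one statement of Lemma \ref{1d}, evaluates the resulting quotient via the Shannon--McMillan--Breiman theorem and \eqref{lyap}, and identifies the limit with the second branch of \eqref{lyapdim}. The only difference is expository: you spell out (via the equivalence $\nu\sim\mu_F\times\mu$ and Fubini) the passage from $\nu$-typical pairs $(\theta,a)$ to $\mu$-typical codes $a$, a step the paper leaves implicit.
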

\begin{proof}

Firstly recall that for $\mu$-almost every ${a}\in\Sigma$, by the Shannon-McMillan-Breiman theorem, 
\[
\lim_{n\to\infty}-\frac{1}{n}\log\mu[a|_n]= h(\mu),
\]
so by \eqref{lyap}
\begin{equation}\label{ently}
\lim_{n\to\infty}\frac{\log\mu[a|_n]}{\log\alpha_2(a|_n)}=\frac{-h(\mu)}{\lambda_2(\mu)}.
\end{equation}
Then, from the left-hand inequality of Lemma \ref{asymp}, for $\mu$-almost every ${a}\in\Sigma$, $\mu_F$ almost every $\theta \in V$, and for all $n\in G(\theta,a,\epsilon)$,
\begin{align*}
&\frac{\log\mu\big(B(\pi({a}),\rho_1\alpha_2(a|_n))\big)}{\log(\rho_1 \alpha_2(a|_n))}\\
&\quad\leq \frac{\log\big(C\mu[a|_n]\,\mu_{\phi_{a_n\cdots a_1}(\theta)}\!\big(B\big(\pi_{\phi_{a_n\cdots a_1}(\theta)}(\sigma^n({a})),
\alpha_2(a|_n)/\alpha_1(a|_n)\big)\big)}{\log(\rho_1 \alpha_2(a|_n))}\\
&\quad= \frac{\log(C\mu[a|_n])}{\log(\rho_1 \alpha_2(a|_n))}+\frac{\log\mu_{\phi_{a_n\cdots a_1}(\theta)}\!\big(B(\pi_{\phi_{a_n\cdots a_1}(\theta)}(\sigma^n({a})),\alpha_2(a|_n)/\alpha_1(a|_n))\big)}{\log(\rho_1 \alpha_2(a|_n))}\\
&\quad= \frac{\log(C\mu[a|_n])}{\log(\rho_1 \alpha_2(a|_n))}
+d(\theta,a,n)\times\frac{\log\big(\alpha_2(a|_n)/\alpha_1(a|_n)\big)}{\log(\rho_1 \alpha_2(a|_n))}\\
&\quad\leq  \frac{-h(\mu)}{\lambda_2(\mu)}+ (1+\epsilon)\times \frac{\lambda_2(\mu)-\lambda_1(\mu)}{\lambda_2(\mu)}\\
&\quad= \frac{h(\mu)+(1+\epsilon)\lambda_1(\mu)}{-\lambda_2(\mu)}+1+\epsilon\leq (1+\epsilon)D(\mu)+\epsilon.
\end{align*}
Here we have used \eqref{ently}, \eqref{lyap}, and \eqref{lyapdim}. Now by Lemma \ref{1d} we know that for $\nu$ almost every pair $(\theta,a)$ the set $G(\theta,a,\epsilon)$ has density $1$ for all $\epsilon>0$, and so we conclude 
\[
\frac{\log\mu\big(B(\pi({a}),\rho_1\alpha_2(a|_n))\big)}{\log(\rho_1 \alpha_2(a|_n))}\leq (1+\epsilon)D(\mu)+\epsilon
\]
on a set of $n$ of density $1$ for $\mu$-almost every $a$.

A similar reverse inequality with $\rho_2$ instead of $\rho_1$ follows in exactly the same way, using the right-hand inequality in \eqref{equiv1}. In taking upper and lower limits, the values of the constants $\rho_1$ and $\rho_2$ are irrelevant since $\alpha_2(a|_n)\to 0$ no faster than geometrically. Since $\epsilon$ can be chosen arbitrarily small \eqref{dens} follows.

 \end{proof}

To complete the proof of  Theorem \ref{HDThm}, note that the upper and lower limits of 
$\log\mu(B(x,r))/\log r$ as $r \to 0$ are completely determined by any  sequence $r_k \searrow 0$ such that $\log r_{k+1}/\log r_k \to 1$. This is the case taking $r_k = \rho_1 \alpha_2(a|_{n_k}))$ where $n_k$ is any  increasing sequence of positive integers of density 1. It follows from  \eqref{dens} that for $\mu$-almost all $a\in\Sigma$  and all $\epsilon>0$,
$$\bigg|\frac{\log\mu\big(B(\pi({a}),r)\big)}{\log r}-D(\mu)\bigg| < 2\epsilon $$
for all sufficiently small $r$. Hence,  the local dimension of $\mu$ exists and is equal to $D(\mu)$ at $\mu$-almost every $a$.

Since $\mu$ was chosen to be a measure supported by $E$ such that $\dim_A(A_1,\cdots,A_m)= D(\mu)$, we conclude that $\dim_H E \geq \dim_A(A_1,\cdots,A_m)$, with the opposite inequality holding for all self-affine sets. This completes the proof of Theorem \ref{HDThm}.

\subsection{Proof of Theorem \ref{BoxThm}}

The box-counting dimension $\dim_B F$ of a set $F$ is defined in terms of the `box counting numbers' $N(\epsilon, F)$, that is the least number of balls of radius $\epsilon$ that can cover set $F$. We will make use of the well-known fact, see \cite{Fal}, that $N(\epsilon, F)$ is comparable to the number of intervals (in $\mathbb R$) or squares (in $\mathbb R^2$) of the $\epsilon$-grid that overlap $F$.

For $0<\epsilon <1$ let $W(\epsilon)$ be the set of words $a_1\cdots a_n$ for which $\alpha_2(a_1\cdots a_n)<\epsilon$, but $\alpha_2(a_1\cdots a_{n-1})>\epsilon$. The cylinders
\[
\{[a_1\cdots a_n]:a_1\cdots a_n\in W(\epsilon)\}
\]
provide a finite cover of $\Sigma$. We need to estimate  $N(\epsilon, E)$ for small $\epsilon$, which we can relate to the covering numbers of the components  $E_{a_1\cdots a_n}$ by
\be\label{boxnos}
N(\epsilon, E)\leq \sum_{a_1\cdots a_n\in W(\epsilon)} N(\epsilon, E_{a_1\cdots a_n})\leq M N(\epsilon, E)
\ee
for a constant $M$ independent of $\epsilon$ (this follows from an estimate of the areas of the $d/2$-neighbourhoods of the sets $E_{a_1\cdots a_n}$ that overlap a ball of radius $\epsilon$, where $d$ is the minimal separation between the $\{T_i(E)\}$; indeed we can take $M = 24/d^2$).

Let $J$ denote the smallest subinterval of $\mathcal Q_2$ such that $\phi_i(\mathcal Q_2)\subset J$  for each $i$.
The next lemma shows that the box-counting numbers of a component of $E$ change only boundedly under projection in a direction from $J$.

\begin{lemma}\label{lemA}
There exists a constant C such that for all $0<\epsilon\leq1$,  all  $a_1\cdots a_n\in W(\epsilon)$ and all $\theta\in J$,
\be\label{projbound}
\frac{1}{C}N(\epsilon, E_{a_1\cdots a_n})\leq N\big(\epsilon, \pi_{\theta}(E_{a_1\cdots a_n})\big)\leq N(\epsilon,E_{a_1\cdots a_n}).
\ee
\end{lemma}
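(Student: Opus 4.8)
The plan is to prove the two inequalities of \eqref{projbound} separately, the right-hand one being elementary and the left-hand one carrying all the content. Since each $\pi_\theta$ is distance non-increasing, any cover of $E_{a_1\cdots a_n}$ by $N$ balls of radius $\epsilon$ projects to a cover of $\pi_\theta(E_{a_1\cdots a_n})$ by $N$ intervals of radius $\epsilon$, so $N(\epsilon,\pi_\theta(E_{a_1\cdots a_n}))\le N(\epsilon,E_{a_1\cdots a_n})$ with no loss, disposing of the right-hand inequality in one line.

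For the left-hand inequality I would first record the geometry of the component. Since $a_1\cdots a_n\in W(\epsilon)$ and there are only finitely many maps $A_i$, the minor semi-axis satisfies $\alpha_2(a_1\cdots a_n)\asymp\epsilon$: the bound $\alpha_2(a_1\cdots a_{n-1})>\epsilon$ together with the uniform lower bound on the one-step contraction of $\alpha_2$ prevents it from dropping below a fixed multiple of $\epsilon$. Thus $E_{a_1\cdots a_n}\subseteq T_{a_1\cdots a_n}(D)$ lies in an ellipse whose minor semi-axis, in the minor-axis direction $\theta_{a_1\cdots a_n}$, has length $\asymp\epsilon$, so the set is confined to a strip of width $O(\epsilon)$ about the major-axis line. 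A standard thin-strip count then gives $N(\epsilon,E_{a_1\cdots a_n})\asymp N(\epsilon,\pi_{\theta_{a_1\cdots a_n}}(E_{a_1\cdots a_n}))$, the projection onto the major axis: covering the major-axis shadow by $k$ intervals and thickening each by $O(\epsilon)$ produces a cover of the set by $O(1)\cdot k$ balls, while the reverse bound is again the Lipschitz estimate.

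It then remains to compare the shadow $\pi_{\theta_{a_1\cdots a_n}}(E_{a_1\cdots a_n})$ with $\pi_\theta(E_{a_1\cdots a_n})$ for general $\theta\in J$. Here I would use the geometric fact, already implicit in the proof of Lemma \ref{asymp}, that the minor-axis direction $\theta_{a_1\cdots a_n}$ lies in a fixed compact subinterval of $\mathrm{int}\,\mathcal Q_2$, uniformly in $a$ and $n$: indeed $A_{a_1\cdots a_n}A_{a_1\cdots a_n}^T$ is symmetric with strictly positive entries, so its Perron (major-axis) eigenvector lies in $\bigcup_i A_i(\mathcal Q_1)$, a compact subcone of $\mathrm{int}\,\mathcal Q_1$, whence the perpendicular minor direction lies in a compact subset of $\mathrm{int}\,\mathcal Q_2$. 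Since $J$ is likewise a compact subinterval of $\mathrm{int}\,\mathcal Q_2$, the directions $\theta$ and $\theta_{a_1\cdots a_n}$ together span an arc of angular length at most $\tfrac{\pi}{2}-c_1$ for a uniform $c_1>0$; equivalently $\theta$ makes an angle bounded away from $0$ with the major axis. Writing points of the ellipse in coordinates $(u,v)$ aligned with the major and minor axes, so that $|v|=O(\epsilon)$ and $\pi_{\theta_{a_1\cdots a_n}}$ is essentially the $u$-coordinate, this transversality gives $\pi_\theta(u,v)=\kappa u+O(\epsilon)$ up to an additive constant, with $\sin c_1\le|\kappa|\le 1$. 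An affine map of slope bounded away from $0$ and $\infty$, followed by an $O(\epsilon)$ perturbation, alters $\epsilon$-covering numbers only by a bounded factor, giving $N(\epsilon,\pi_\theta(E_{a_1\cdots a_n}))\asymp N(\epsilon,\pi_{\theta_{a_1\cdots a_n}}(E_{a_1\cdots a_n}))\asymp N(\epsilon,E_{a_1\cdots a_n})$ with constants independent of $\epsilon$, $a$, $n$ and $\theta\in J$, which is the required inequality.

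\textbf{The main obstacle} is the uniform transversality between the projection direction and the major axis of the component. The degenerate case I must exclude is $\theta$ nearly parallel to the long axis of the thin ellipse, for then $\pi_\theta$ collapses the long direction, leaving $N(\epsilon,\pi_\theta(E_{a_1\cdots a_n}))$ bounded while $N(\epsilon,E_{a_1\cdots a_n})$ may be large. Ruling this out uniformly over all words, all lengths and all $\theta\in J$ is exactly where strict positivity of the matrices enters, through the Perron--Frobenius control of the major-axis directions; once this uniform angle bound is secured, the remaining covering estimates are routine.
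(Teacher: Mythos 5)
Your proof is correct and follows essentially the same route as the paper's: the right-hand inequality comes from the $1$-Lipschitz property of projection, and the left-hand inequality from the fact that $E_{a_1\cdots a_n}$ lies in an ellipse of width $O(\epsilon)$ whose major axis lies in $\mathcal Q_1$ (by positivity of the matrices) and is therefore uniformly transversal to every $\theta\in J$, so that $\epsilon$-intervals pull back to sets coverable by boundedly many $\epsilon$-balls. The only differences are cosmetic: you factor the comparison through the projection onto the major axis and give an explicit Perron--Frobenius justification of the uniform angle bound, whereas the paper pulls back the covering intervals directly and simply asserts that the major axis direction lies in the first quadrant.
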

\begin{proof}
Orthogonal projection from $\mathbb R^2$ onto a line contracts distances, giving the right hand inequality. 

Now note that $J$ lies strictly inside $\mathcal Q_2$ so $\pi_\theta$ is a projection onto a line, $\ell_\theta$ say, of direction uniformly interior to the first quadrant. The set  $E_{a_1\cdots a_n}$ is contained in the ellipse $T_{i_1\cdots i_n}(D)$ which has minor axis of length at most $2\epsilon$ and major axis with direction in the first quadrant. Thus there is an angle $0<\tau<\pi/2$  such that  $\ell_\theta$ that makes an angle at most  $\tau$ with the major axis of $T_{i_1\cdots i_n}(D)$ for all $\theta \in J$ and $a_1\cdots a_n\in W(\epsilon)$. A trigonometric calculation shows that if $B\subset \ell_\theta$ is a covering ball (i.e. interval) of radius $\epsilon$, then 
$\pi_\theta^{-1}(B) \cap T_{i_1\cdots i_n}(D)$ may be covered by at most $C:= 8(\tan\tau + \sec\tau)$ balls of radius $\epsilon$, giving the left-hand inequality.
\end{proof}

We now compare the box-counting numbers of projections of the components $E_{a_1\cdots a_n}$ with those of projections of the set $E$ itself in appropriately chosen directions.

\begin{lemma}\label{lemB}
There is a number $C>0$ such that
\be\label{boxproj2}
N\big(\epsilon,E_{a_1\cdots a_n}\big)
\geq C\ \frac{\alpha_1(a_1\cdots a_n)}{\alpha_2(a_1\cdots a_n)}\mathcal L \big(\pi_{\phi_{a_n\cdots a_1}(\theta)}(E)\big)
\ee
for all $0<\epsilon\leq1$, all $a_1\cdots a_n\in W(\epsilon)$ and all $\theta\in J$.
\end{lemma}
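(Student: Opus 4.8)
The plan is to bound $N(\epsilon, E_{a_1\cdots a_n})$ below by a one–dimensional covering number of a projection, and then to identify that projection as a scaled copy of $\pi_{\phi_{a_n\cdots a_1}(\theta)}(E)$ with scaling factor comparable to $\alpha_1(a_1\cdots a_n)$. Throughout write $A := A_{a_1\cdots a_n}$, $\psi := \phi_{a_n\cdots a_1}(\theta)$ and $\alpha_i := \alpha_i(a_1\cdots a_n)$. First I would note that orthogonal projection is $1$-Lipschitz, so $N(\epsilon, E_{a_1\cdots a_n})\geq N(\epsilon,\pi_{\theta}(E_{a_1\cdots a_n}))$ (this is the content of the right-hand inequality in Lemma \ref{lemA}), and that any subset of a line of Lebesgue measure $L$ requires at least $L/2\epsilon$ intervals of radius $\epsilon$ to cover it, so $N(\epsilon,\pi_{\theta}(E_{a_1\cdots a_n}))\geq \mathcal L\big(\pi_{\theta}(E_{a_1\cdots a_n})\big)/(2\epsilon)$. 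It therefore remains to estimate the length $\mathcal L\big(\pi_{\theta}(E_{a_1\cdots a_n})\big)$ from below.

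For the length, iterating the single-step identity $\pi_{\theta}\circ T_i = f_{i,\theta}\circ\pi_{\phi_i(\theta)}$ from \eqref{fith} (equivalently Proposition \ref{ProjProp}) shows that $\pi_{\theta}(E_{a_1\cdots a_n})$ is the image of $\pi_{\psi}(E)$ under an affine map of the line. The scaling factor of this map is read off directly from $\pi_{\theta}(T_{a_1\cdots a_n}x)=\langle\theta^{\perp},Ax+d_{a_1\cdots a_n}\rangle=\langle A^{T}\theta^{\perp},x\rangle+\text{const}$: since $\psi$ is proportional to $A^{-1}\theta$ and hence orthogonal to $A^{T}\theta^{\perp}$, projecting $E$ onto the unit vector in direction $A^{T}\theta^{\perp}$ is exactly $\pi_{\psi}(E)$, giving
\[
\mathcal L\big(\pi_{\theta}(E_{a_1\cdots a_n})\big)=\|A^{T}\theta^{\perp}\|\,\mathcal L\big(\pi_{\psi}(E)\big).
\]

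The crux is to show $\|A^{T}\theta^{\perp}\|\geq c\,\alpha_1$ with $c>0$ uniform over the word $a_1\cdots a_n$ and over $\theta\in J$; geometrically this says that projecting the long thin ellipse $T_{a_1\cdots a_n}(D)$ in a direction of $J$ reveals essentially its full major semiaxis. Writing $v_1$ for the major-axis (left singular) direction of $A$, one has $\|A^{T}\theta^{\perp}\|\geq\alpha_1|\langle v_1,\theta^{\perp}\rangle|$, so it suffices to bound $|\langle v_1,\theta^{\perp}\rangle|$ below. Because $A$ has strictly positive entries, so does $A^{T}A$, and Perron--Frobenius gives that its leading eigenvector $u_1$ lies in the open first quadrant; hence $v_1=Au_1/\alpha_1\in A(\text{first quadrant})\subseteq A_{a_1}(\text{first quadrant})$, which by the angle-contraction estimate preceding Lemma \ref{asymp} lies in a fixed compact subcone of the open first quadrant, uniformly in the word. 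Since $J\subset\mathrm{int}\,\mathcal Q_2$ is compact, $\theta^{\perp}$ ranges over a fixed compact subcone of the open first quadrant; two such subcones subtend a maximal angle strictly less than $\pi/2$, so $\langle v_1,\theta^{\perp}\rangle\geq c>0$. This uniform scaling bound is the main obstacle; everything else is a routine projection and covering estimate, whereas here the strict positivity of the matrices is essential, both through Perron--Frobenius and through the angle contraction, to keep all the major-axis directions inside one compact subcone of the open quadrant and bounded away from being perpendicular to the $J$-directions.

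Finally I would assemble the pieces using the definition of $W(\epsilon)$. From $\alpha_2(a_1\cdots a_{n-1})>\epsilon$ and supermultiplicativity of the smallest singular value, $\alpha_2=\alpha_2(a_1\cdots a_n)\geq\big(\min_i\alpha_2(A_i)\big)\,\alpha_2(a_1\cdots a_{n-1})\geq c_0\,\epsilon$, so $1/\epsilon\geq c_0/\alpha_2$. Combining the three displayed bounds,
\[
N(\epsilon, E_{a_1\cdots a_n})\ \geq\ \frac{\|A^{T}\theta^{\perp}\|}{2\epsilon}\,\mathcal L\big(\pi_{\psi}(E)\big)\ \geq\ \frac{c\,c_0}{2}\,\frac{\alpha_1}{\alpha_2}\,\mathcal L\big(\pi_{\phi_{a_n\cdots a_1}(\theta)}(E)\big),
\]
which is the claimed inequality with $C=c\,c_0/2$.
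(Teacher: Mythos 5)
Your proof is correct and takes essentially the same approach as the paper: project using the right-hand inequality of Lemma \ref{lemA}, identify $\pi_{\theta}(E_{a_1\cdots a_n})$ as an affine copy of $\pi_{\phi_{a_n\cdots a_1}(\theta)}(E)$ whose scale factor $\|A^{T}\theta^{\perp}\|$ is exactly the paper's $\rho(a_1\cdots a_n)^{-1}\asymp\alpha_1(a_1\cdots a_n)$, and conclude via $N(\epsilon,F)\geq \mathcal L(F)/(2\epsilon)$ for $F\subset\mathbb R$ together with $\alpha_2(a_1\cdots a_n)\asymp\epsilon$ on $W(\epsilon)$. The only differences are cosmetic bookkeeping (you rescale Lebesgue measure where the paper rescales covering numbers) and that you supply an explicit Perron--Frobenius/compact-cone justification for the uniform angle bound that the paper asserts by noting the angles $\tau(a_1\cdots a_n)$ are uniformly bounded away from $\pi/2$.
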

\begin{proof}
The linear map $T_{a_1\cdots a_n}^{-1}:T_{i_1\cdots i_n}(D)\to D$ maps straight lines in direction $\theta$ to lines 
in direction $\phi_{a_n\cdots a_1}(\theta)$, scaling the spacing between such parallel lines by a factor 
\be\label{rhodef}
\rho(a_1\cdots a_n):= \big(\alpha_1(a_1\cdots a_n)^2\cos^2 \tau+ \alpha_2(a_1\cdots a_n)^2 \sin^2\tau\big)^{-1/2}
\ee  
where $\tau\equiv \tau(a_1\cdots a_n) $ is the angle between  $\theta$ and the minor axis direction of the ellipse $T_{i_1\cdots i_n}(D)$, using elementary geometry of the ellipse. It follows that 
\begin{eqnarray*}
N\big(\epsilon,\pi_{\theta}(E_{a_1\cdots a_n})\big)
&\asymp & N\big(\epsilon\rho(a_1\cdots a_n),\pi_{\phi_{a_n\cdots a_1}(\theta)}(E)\big)\\
&\asymp & N\Big(\frac{\alpha_2(a_1\cdots a_n)}{\alpha_1(a_1\cdots a_n)},\pi_{\phi_{a_n\cdots a_1}(\theta)}(E)\Big)
\end{eqnarray*}
with the second equivalence following from \eqref{rhodef}, noting that the $\tau(a_1\cdots a_n) $ are uniformly bounded away from $\pi/2$ and that changing $\epsilon$ by a bounded factor changes $N(\epsilon,F)$ by at most a bounded factor.

Inequality \eqref{boxproj2}  follows noting that $N(\epsilon,F) \geq \lceil \mathcal L (F)/\epsilon \rceil $ for all $F \subset \mathbb R$ and incorporating \eqref{projbound}.
\end{proof}

We can now finish the proof of Theorem \ref{BoxThm}.
\begin{proof}
Throughout this proof `$\asymp$' will mean that the ratio of the two sides is bounded away from $0$ and $\infty$ uniformly in $\epsilon,n$ and $a_1\cdots a_n$.

Let $0< \epsilon<1$. Note that for $1\leq s\leq 2$ and $a_1\cdots a_n \in W(\epsilon)$,
\begin{equation}\label{MuGibbs}
\mu[a_1\cdots a_n]
 \asymp \frac{\alpha_1(a_1\cdots a_n)\alpha_2(a_1\cdots a_n)^{1-s}}{\sum\limits_{b_1\cdots b_n \in W(\epsilon)} \alpha_1(b_1\cdots b_n)\alpha_2(b_1\cdots b_n)^{1-s}}
  \asymp \frac{\alpha_1(a_1\cdots a_n)}{\sum\limits_{b_1\cdots b_n \in W(\epsilon)} \alpha_1(b_1\cdots b_n)}.
\end{equation}
The left-hand equivalence is true because $\mu$ is a Gibbs measure associated to the subadditive potential arising from the cylinder function  $\alpha_1(a_1\cdots a_n)(\alpha_2(a_1\cdots a_n))^{s-1}$ where $1\leq s\leq 2$, see \cite{KaenmakiReeve} or \cite[Definition 2.6]{BaranyRams}. Since $\alpha_2(a_1\cdots a_n)$ is boundedly close to $\epsilon$ for $a_1\cdots a_n\in W(\epsilon)$, we can dispense with the factors $\alpha_2(b_1\cdots b_n)^{s-1}$.

From the hypotheses of the theorem we may choose $w>0$ such that $\mu_F(G) >0$
where $G:= \{\tau \in \mathcal Q_2 : \mathcal L(\pi_\tau E) \geq w\}$. For each $0<\epsilon<1$ 
\begin{eqnarray*}
 \mu_F(G) & \leq&\sum_{\substack{a_1\cdots a_n \in W(\epsilon)\\ \phi_{a_n\cdots a_1}(J) \cap G \neq \emptyset}}
 \mu[a_1\cdots a_n]
 \qquad (\text{taking a covering of $G$ by cylinder sets})\\
& \asymp&\sum_{\substack{a_1\cdots a_n \in W(\epsilon)\\ \phi_{a_n\cdots a_1}(J) \cap G \neq \emptyset}}
\frac{\alpha_1(a_1\cdots a_n)}{\sum\limits_{b_1\cdots b_n \in W(\epsilon)} \alpha_1(b_1\cdots b_n)}
\qquad(\text{by \eqref{MuGibbs}})\\ 
& \asymp&\sum_{\substack{a_1\cdots a_n \in W(\epsilon)\\ \phi_{a_n\cdots a_1}(J) \cap G \neq \emptyset}}
\frac{\alpha_1(a_1\cdots a_n)} {\alpha_2(a_1\cdots a_n)}\  \frac{\epsilon}{\sum\limits_{b_1\cdots b_n \in W(\epsilon)} \alpha_1(b_1\cdots b_n)}
\quad(\text{as $\alpha_2(a_1\cdots a_n)\asymp \epsilon$})\\
& \leq&\sum_{\substack{a_1\cdots a_n \in W(\epsilon)\\ \phi_{a_n\cdots a_1}(J) \cap G \neq \emptyset}}
\frac{1}{C'w}N(\epsilon,E_{a_1\cdots a_n}) \  \frac{\epsilon\, \epsilon^{1-s}}{\sum\limits_{b_1\cdots b_n \in W(\epsilon)} \alpha_1(b_1\cdots b_n)\alpha_2(b_1\cdots b_n)^{1-s}} \\
&& \quad (\text{applying \eqref{boxproj2} with $\theta \in J$ s.t. $\phi_{a_n\cdots a_1}(\theta) \in G$,
 with $\alpha_2(b_1\cdots b_n)\asymp \epsilon$})\\
& \leq&
\frac{M}{C'w}N(\epsilon,E) \ \frac{ \epsilon^{s}}{\sum\limits_{b_1\cdots b_n \in W(\epsilon)} \alpha_1(b_1\cdots b_n)\alpha_2(b_1\cdots b_n)^{1-s}}
\qquad(\text{by \eqref{boxnos}}).
\end{eqnarray*}

We recall that the maps $T_{a_1\cdots a_n}^{-1}$ map lines at angle $\theta$ to lines at angle $\phi_{a_n\cdots a_1}(\theta)$, which accounts for the reversed order of the words $a_n\cdots a_1$ in the above summations. Such sums first arise in Lemma \ref{asymp}.

If $1\leq s <\dim_A E$, it is easily checked, as was shown in \cite{FalSA2}, that there is a number $C_s>0$ such that sum $\sum\limits_{b_1\cdots b_n \in W} \alpha_1(b_1\cdots b_n)\alpha_2(b_1\cdots b_n)^{1-s}\geq C_s$  for all partitions $W$ of $\Sigma$ into cylinders, in particular for $W= W(\epsilon)$.  Thus $N(\epsilon, E) \geq C_s'\epsilon^{-s}$ and so ${\underline \dim}_B E \geq s$ for all $1\leq s <\dim_A E$, from which the conclusion follows.
\end{proof}

\section{Explicit examples of sets with equal Hausdorff and affinity dimensions}\label{Examples}
\setcounter{equation}{0}
\setcounter{theo}{0}

In this final section we present specific classes of self-affine sets which have equal Hausdorff, box-counting and affinity dimensions.

\subsection{Self-affine sets with dimension larger than 1}\label{sabig}

We construct IFSs of affine maps for which $\dim_H \mu+ \dim_H \mu_F>2$ so that box, Hausdorff and affinity dimensions of $E$ are equal by Corollary \ref{Cor2}. This may be the first specific class of affine sets with  Hausdorff dimension larger than one for which the affinity dimension and Hausdorff dimension are known to coincide, apart from examples based on diagonal or upper triangular matrices which have extra structure.

Our example is built out of a large number of contractions $\{T^1_{i,j},T^2_{i,j}\}$, indexed by $1\leq i,j,\leq N$, where the linear parts consist of just two matrices $A_1, A_2$ for which the intervals $\phi_1(\mathcal Q_2)$ and $\phi_2(\mathcal Q_2)$ are disjoint. We use enough contractions to guarantee that the Hausdorff dimension is close to two, while the fact that the Furstenberg measure is supported on a non-overlapping Cantor set allows us to give a lower bound for its Hausdorff dimension.

For angles $0<\tau^- <\tau^+ < \frac{\pi}{2}$ consider the contracting matrix 
\be\label{ataudef}
A(\tau^- ,\tau^+) 
= \frac{1}{2}\left(
\begin{array}{cc}
\cos \tau^-  &   \cos \tau^+  \\
\sin \tau^-  &   \sin \tau^+
\end{array}
\right),
\ee
which maps the unit square into itself and into a cone bounded by half-lines making angles $\tau^-$ and $\tau^+$ with the horizontal axis. The singular values of $A$ are 
\be\label{svs}
\alpha_1:= \frac{1}{2}\big(1+ \cos(\tau^+  - \tau^-)\big)^{1/2}, \quad \alpha_2:=\frac{1}{2}\big(1- \cos(\tau^+  - \tau^-)\big)^{1/2}.
\ee

Now choose angles $0<\tau^-_1 <\tau^+_1 < \tau^-_2 <\tau^+_2< \frac{\pi}{2}$ such that, for convenience, 
$\tau:= \tau^+_1 - \tau^-_1 = \tau^+_2 - \tau^-_2 <\frac{\pi}{4}$. For such $\tau^-_1,\tau^+_1, \tau^-_2 ,\tau^+_2$ define matrices 
$$A_1 = A(\tau^-_1 ,\tau^+_1), \quad A_2 = A(\tau^-_2 ,\tau^+_2).$$
Fix a large integer $N$. For $1\leq i,j \leq N$ define the matrices
$$A_{i,j}^1= \frac{1}{N} A_1, \quad A_{i,j}^2= \frac{1}{N} A_2$$
and affine maps
$$T_{i,j}^1=A_{i,j}^1 + b_{i,j}^1, \quad T_{i,j}^2=A_{i,j}^2+ b_{i,j}^2,$$
where $b_{i,j}^1, b_{i,j}^2$ are translation vectors close to the vector $(i/N,j/N)$ to ensure that each  $T_{i,j}^1$ and $T_{i,j}^2$ map the unit square $[0,1]^2$ onto disjoint parallelograms in the interior of the square $[i/N,j/N]$. 

Let $E_N$ be the attractor of the self-affine set defined by the IFS $\{T_{i,j}^1,T_{i,j}^2 : 1\leq i,j \leq N\}$.   Figure 1 shows a template definining such  $T_{i,j}^1,T_{i,j}^2$ for $N=5$ (where the pararallelograms show the images of the unit square under the affine mappings) along with the corresponding self-affine set.
\begin{figure}[h]\label{fig1}
\begin{center}
\includegraphics[scale=0.35]{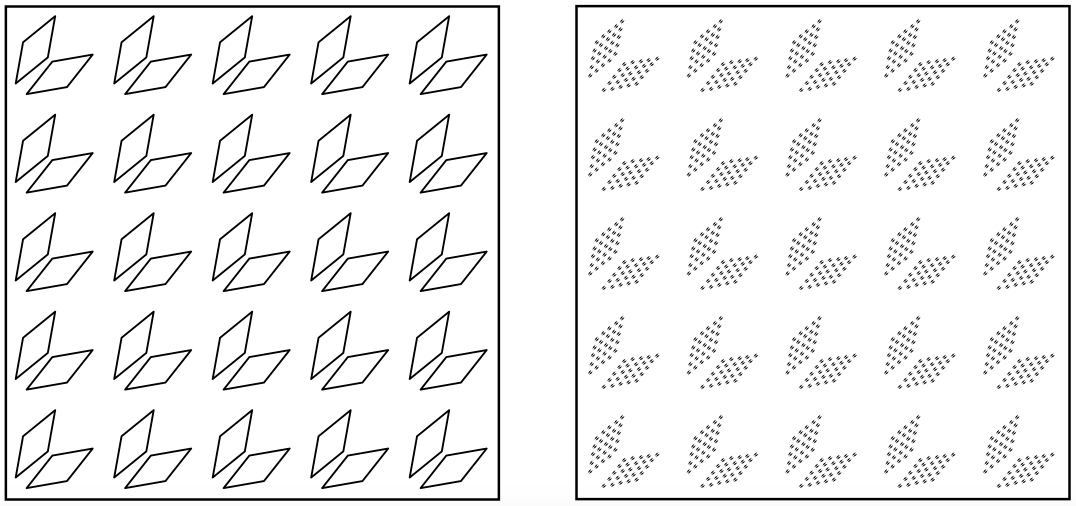}
\caption{Template and self-affine set with equal Hausdorff and affinity dimensions larger than 1}
\end{center}
\end{figure}

From
\eqref{svs}, the singular values of all the $A_{i,j}$ are
$\alpha_1 / N$ and $\alpha_2 / N$. 
\begin{lemma}
Let $\mu$ be the K\"aenm\"aki measure on $E_N$. Then
\begin{equation}
\dim_H \mu \geq \frac{2\log N + \log 2}{\log N - \log \alpha_2}  - \frac{\log(\alpha_1/\alpha_2)}{\log N - \log \alpha_1}.
\end{equation}
In particular, $\dim_H \mu \to 2$ as $N \to \infty$.
\end{lemma}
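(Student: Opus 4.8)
The plan is to bound the lower local dimension of $\mu$ from below and then use the standard fact that $\dim_H\mu$ is at least the essential infimum over $a$ of $\liminf_{r\to0}\log\mu(B(\pi(a),r))/\log r$; since $\mu$ lives in the plane we also have $\dim_H\mu\le2$, so a lower bound tending to $2$ forces the stated limit. Thus everything reduces to an upper bound on $\mu(B(\pi(a),r))$ for small $r$, which I would obtain by showing that such a ball meets essentially one cylinder.

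First I would record the consequences of strict positivity. Because $A_1,A_2$ have strictly positive entries, the usual Birkhoff/Hilbert-metric estimate for products of positive matrices gives uniform multiplicativity of the top singular value: there is $C\ge1$ with $C^{-1}(\alpha_1/N)^n\le\alpha_1(a|_n)\le(\alpha_1/N)^n$ for all $a,n$, the upper bound being submultiplicativity of the norm. Combined with the exact determinant identity $\alpha_1(a|_n)\alpha_2(a|_n)=(\alpha_1\alpha_2/N^2)^n$ and \eqref{svs}, this yields $\alpha_2(a|_n)\asymp(\alpha_2/N)^n$ as well, so the Lyapunov exponents equal $\log(\alpha_1/N)$ and $\log(\alpha_2/N)$ for every invariant measure. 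Writing $s=\dim_A E_N$, which lies in $(1,2)$ for large $N$ since $2\alpha_1\alpha_2=\tfrac12\sin\tau<1$, the singular value function satisfies $\phi^s(A_{a|_n})=\alpha_1(a|_n)\alpha_2(a|_n)^{s-1}\asymp\big((\alpha_1/N)(\alpha_2/N)^{s-1}\big)^n$ and is therefore comparable across all $(2N^2)^n$ words of length $n$. As $\mu$ is the Gibbs measure for this potential, $\mu[a|_n]\asymp(2N^2)^{-n}$, whence by Shannon–McMillan–Breiman $h(\mu)=\log(2N^2)$.

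Next I would estimate the ball. By the strong separation inequality already used in Lemma \ref{asymp} (see \eqref{mes1}), the distance from $\pi(a)$ to every generation-$n$ cylinder other than $E_{a|_n}$ is at least $d\,\alpha_2(a|_{n-1})\asymp(N/\alpha_2)\,\alpha_2(a|_n)$, which exceeds $\alpha_2(a|_n)$ by the large factor $N/\alpha_2$. Hence for a fixed small constant $\rho_1$ the ball $B(\pi(a),\rho_1\alpha_2(a|_n))$ meets no generation-$n$ cylinder except $E_{a|_n}$, so
\[
\mu\big(B(\pi(a),\rho_1\alpha_2(a|_n))\big)\le\mu[a|_n]\asymp(2N^2)^{-n}.
\]
Taking logarithms and using $\log\alpha_2(a|_n)\asymp n\log(\alpha_2/N)$, the ratio $\log\mu(B)/\log r$ along the scales $r=\rho_1\alpha_2(a|_n)$ is bounded below by a quantity tending to $\log(2N^2)/\log(N/\alpha_2)$. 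Since these scales decrease geometrically, the interpolation argument at the end of the proof of Theorem \ref{HDThm} shows the lower local dimension at all scales is at least $\log(2N^2)/\log(N/\alpha_2)$, uniformly in $a$. Therefore
\[
\dim_H\mu\ge\frac{\log(2N^2)}{\log(N/\alpha_2)}=\frac{2\log N+\log2}{\log N-\log\alpha_2},
\]
which already exceeds the asserted bound (the latter subtracts the nonnegative term $\log(\alpha_1/\alpha_2)/(\log N-\log\alpha_1)$); and as the right-hand side tends to $2$ while $\dim_H\mu\le2$, we conclude $\dim_H\mu\to2$.

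I expect the only genuine obstacle to be the uniform multiplicativity $\alpha_1(a|_n)\asymp(\alpha_1/N)^n$: strict positivity must be invoked to stop the top singular value from drifting below the product of the individual norms, since naively tracking a single vector through the product only yields the useless factor $(1/2)^n$, so one really needs the positive-cone (Birkhoff) contraction. Once that is in place, the identification $\mu[a|_n]\asymp(2N^2)^{-n}$, the value $h(\mu)=\log(2N^2)$, and the one-cylinder ball estimate are all routine, the last resting solely on the strong separation quantified in Lemma \ref{asymp}.
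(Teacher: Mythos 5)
Your proof is correct, but it takes a genuinely different route from the paper's. The paper's argument is a short chain of cited facts: K\"aenm\"aki's identity $D(\mu)=\dim_A E_N$, the elementary lower bound $\dim_H E_N\geq \log(2N^2)/\log(N/\alpha_2)$ coming from the smallest singular value under strong separation, the general inequality $\dim_H\mu\geq h(\mu)/(-\lambda_2(\mu))$ for self-affine measures, and the crude two-sided bounds $\log(\alpha_2/N)\leq\lambda_2(\mu)\leq\lambda_1(\mu)\leq\log(\alpha_1/N)$ on the Lyapunov exponents; the subtracted term $\log(\alpha_1/\alpha_2)/(\log N-\log\alpha_1)$ in the statement is precisely the price of not pinning the Lyapunov exponents down exactly. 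You instead compute everything from scratch: positivity (Birkhoff cone contraction) gives quasi-multiplicativity, hence $\alpha_i(a|_n)\asymp(\alpha_i/N)^n$ and the \emph{exact} Lyapunov exponents; this makes all $\phi^{s(N)}$-values of level-$n$ words comparable, so the Gibbs property plus normalization forces $\mu[a|_n]\asymp(2N^2)^{-n}$; and the one-cylinder ball estimate from strong separation (the same separation used for \eqref{mes1}) then bounds the lower local dimension everywhere. As a result you prove the stronger bound
\begin{equation*}
\dim_H\mu\ \geq\ \frac{2\log N+\log 2}{\log N-\log\alpha_2}\ =\ \frac{h(\mu)}{-\lambda_2(\mu)},
\end{equation*}
which implies the stated inequality since the term subtracted there is nonnegative, and which also gives the limit $2$. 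In effect you reprove, in this uniform setting, both external inputs of the paper ($\dim_H\mu\geq h/(-\lambda_2)$ and the singular-value lower bound for $\dim_H E_N$), in the same spirit as the paper's own treatment of $\mu_F$ around \eqref{Kaen} but sharpened by quasi-multiplicativity. What each buys: the paper's route is shorter and needs no multiplicativity input, at the cost of a weaker constant; yours is self-contained and sharper, at the cost of invoking the cone argument and the Gibbs property explicitly. Either bound suffices for the application in Example \ref{largeprop}, i.e.\ for verifying \eqref{final}.
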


\begin{proof}
Since $\mu$ is the K\"aenm\"aki measure,
\begin{equation}\label{diml}
D(\mu) =\dim_A E_N \geq \dim_H E_N \geq \frac{\log (2N^2)}{- \log (\alpha_2 / N)},
\end{equation}
where the right-hand inequality follows since the smaller singular values give a lower bound for the Hausdorff dimension of $E$.
Furthermore, 
\begin{equation}\label{dimhmu}
\dim_H \mu \geq \frac{h(\mu)}{-\lambda_2(\mu)}\ =\ D (\mu)  - \frac{\lambda_1(\mu)-\lambda_2(\mu)}{-\lambda_2(\mu)},
\end{equation}
using that  the Hausdorff dimension of a self-affine measure can be bounded below using the smaller Lyapunov exponent, together with the definition of Lyapunov dimension when greater than 1. Relating the Lyapunov exponents to the singular values, 
$$ 
\log\big(\alpha_2 /N\big) \  \leq  \ \lambda_2(\mu)  \ \leq  \ \lambda_1(\mu)  \ \leq  \  \log\big(\alpha_1 /N),
$$
so \eqref{dimhmu} becomes, incorporating \eqref{diml},
\begin{eqnarray}
\dim_H \mu&\geq& \frac{\log (2N^2)}{- \log (\alpha_2 / N)}  - \frac{\log(\alpha_1/N)-\log(\alpha_2/N)}{-\log(\alpha_1/N)}\nonumber\\
&=& \frac{2\log N + \log 2}{\log N - \log \alpha_2}  - \frac{\log(\alpha_1/\alpha_2)}{\log N - \log \alpha_1}.\label{dhe}
\end{eqnarray}
\end{proof}

We now give a lower bound for the dimension of the Furstenberg measure. 
\begin{lemma}
The Furstenberg measure satisfies
\[\dim_H \mu_F \geq \frac{\log 2 - \log \left(\frac{\alpha_1^{2-s(N)}}{\alpha_2^{2-s(N)}}\right)}{\log 8 - \log \tau}\]
where $s(N)=\dim_A(E_N)$, with $s(N)\to 2$ as $N\to\infty$.
\end{lemma}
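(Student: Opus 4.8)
The plan is to exploit that, because $\phi_1(\mathcal Q_2)$ and $\phi_2(\mathcal Q_2)$ are disjoint, the Furstenberg measure $\mu_F$ is supported on the attractor of the two-map projective IFS $\{\phi_1,\phi_2\}$ acting on $\mathcal Q_2$, and this attractor is a genuine Cantor set satisfying the strong separation condition. This yields a symbolic coding of $\operatorname{supp}\mu_F$ by sequences in $\{1,2\}^{\mathbb N}$ recording only the \emph{types} of the maps: since the linear parts of all $T^1_{i,j}$ equal $\tfrac1N A_1$ and those of all $T^2_{i,j}$ equal $\tfrac1N A_2$, the maps $\phi_i$, and hence $\rho$ and $\mu_F$, depend on a symbol only through its type. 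I would lower-bound $\dim_H\mu_F$ by the mass distribution principle \cite{Fal}: it suffices to show $\mu_F(B(\theta,r))\le C r^d$ for all $\theta$ and small $r$, and by the strong separation of the two branches a ball of radius comparable to the diameter of a level-$k$ cylinder meets only boundedly many level-$k$ cylinders. Thus everything reduces to estimating, as functions of $k$, the $\mu_F$-measure and the diameter of a level-$k$ cylinder $\phi_{t_0}\cdots\phi_{t_{-k+1}}(\mathcal Q_2)$.

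For the measure, I would use that $\mu_F$ of such a cylinder equals the $\overline{\mu}$-measure of the corresponding past-cylinder, which by the quasi-Bernoulli property is comparable to the $\mu$-measure of the set of length-$k$ words with the prescribed type sequence. Since $\mu$ is the K\"aenm\"aki measure, a Gibbs measure for the potential $\phi^{s}$ of \eqref{dimaff} with $s=s(N)=\dim_A E_N$, this $\mu$-measure is comparable to $N^{2k}\phi^{s}(A_{t_1\cdots t_k})$, the factor $N^{2k}$ counting the free position labels and $\phi^s(A_{t_1\cdots t_k})=\alpha_1\alpha_2^{s-1}$ referring to the (type-dependent) product. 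Feeding in the singular-value bounds for products of the positive matrices $\tfrac1N A_1,\tfrac1N A_2$ --- here using that $A_1,A_2$ share the singular values $\alpha_1,\alpha_2$ of \eqref{svs} and the exact multiplicativity of the determinant --- yields an upper bound whose $k$-th root carries the factor $\tfrac12$ from the two types, corrected by a power of $\alpha_1/\alpha_2$; this is the source of the numerator $\log2-(2-s)\log(\alpha_1/\alpha_2)$.

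For the diameter, I would estimate directly how strongly each projective map $\phi_i$ contracts $\mathcal Q_2$. Because each $A_i$ sends the first quadrant into a cone of angular width $\tau$, an elementary trigonometric computation of the same type as in Lemma \ref{asymp} bounds the contraction of $\phi_i$ on $\mathcal Q_2$ from below; composing $k$ of them gives a lower bound for the diameter of a level-$k$ cylinder of the form $c(\tau/8)^k$, which is the source of the denominator $\log8-\log\tau$. Combining the measure bound and the diameter bound through the mass distribution principle delivers the stated inequality, and $s(N)\to2$ follows since $s(N)=\dim_A E_N\ge\dim_H E_N\ge\dim_H\mu\to2$ by the previous lemma.

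I expect the main obstacle to be the measure estimate of the second paragraph: one must pass correctly between the Furstenberg cylinders (governed by the \emph{past} coordinates and sensitive only to types), the two-sided Gibbs measure $\overline{\mu}$ (which is only quasi-Bernoulli, not a product), and the K\"aenm\"aki Gibbs weights, and then extract a clean per-level exponent from the subadditive potential $\phi^s$. The diameter estimate, by contrast, is a routine though slightly tedious explicit geometric calculation, and the constant $8$ reflects a deliberately crude but uniform contraction bound rather than a sharp one.
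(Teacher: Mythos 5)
Your proposal is correct, and its two computational ingredients are exactly the paper's: the upper bound on the measure of a type-cylinder comes from the Gibbs property of the K\"aenm\"aki measure combined with the equal-determinant trick (giving the per-level factor $\tfrac12(\alpha_1/\alpha_2)^{2-s(N)}$, hence the numerator), and the diameter lower bound comes from the crude uniform contraction estimate $|\phi(\theta_1)-\phi(\theta_2)|\geq\tfrac{\tau}{8}|\theta_1-\theta_2|$ (hence the denominator). Where you differ is the final dimension-extraction step. The paper converts its per-word bound \eqref{Kaen} into an entropy bound via Shannon--McMillan--Breiman, passes to the factor on type sequences using $h(\mu_F)\geq h(\mu)-\log(N^2)$ (each type symbol has $N^2$ label preimages), and then invokes the standard bound $\dim_H\mu_F\geq h(\mu_F)/(\log 8-\log\tau)$ for an IFS with strong separation and contraction ratios bounded below. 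You instead sum \eqref{Kaen} over the $N^{2k}$ words in a type class to bound $\mu_F$ of each cylinder directly, and apply the mass distribution principle; in effect you are reproving the ``standard entropy and Lyapunov exponent argument'' that the paper cites, which makes your version more self-contained and gives uniform bounds on all cylinders rather than almost-everywhere asymptotics, at the cost of some bookkeeping the paper avoids. Two small remarks on that bookkeeping: the identification of $\mu_F$-cylinders with past-cylinders of $\overline{\mu}$ needs only shift invariance (the event depends on finitely many coordinates whose joint law is that of $\mu$), so quasi-Bernoulli is not required there; and in the covering step you should use stopping cylinders (maximal cylinders of diameter less than $r$) rather than fixed-level cylinders, since same-level cylinders can have very disparate diameters --- with stopping cylinders, strong separation and the lower contraction bound give that $B(\theta,r)$ meets only $O(1/\tau)$ of them, each of measure $O(r^d)$, and the principle applies. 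Your final observation that $s(N)\to2$ via $s(N)\geq\dim_H E_N\geq\dim_H\mu\to2$ matches the paper.
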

\begin{proof}
We first show that the K\"aenm\"aki measure $\mu$ is close to being the uniform Bernoulli measure on $\{1,\cdots 2N^2\}^{\mathbb N}$. Since the matrices $A^1_{i,j}, A^2_{i',j'}$ all have the same determinant, 
\[
\alpha_1(a_1\cdots a_n)\alpha_2(a_1\cdots a_n) = (\det(A^1_{1,1}))^n
\]
for any word $a_1\cdots a_n$. In particular, for two words $a_1\cdots a_n$ and $b_1\cdots b_n$,
\[
\frac{\phi_{s(N)}(a_1\cdots a_n)}{\phi_{s(N)}(b_1\cdots b_n)}=\frac{\alpha_1(a_1\cdots a_n)(\alpha_2(a_1\cdots a_n))^{s(N)-1}}{\alpha_1(b_1\cdots b_n)(\alpha_2(b_1\cdots b_n))^{s(N)-1}}=\frac{(\det(A^1_{1,1}))^n}{(\det(A^1_{1,1}))^n}\frac{(\alpha_2(b_1\cdots b_n))^{2-s(N)}}{(\alpha_2(a_1\cdots a_n))^{2-s(N)}}
\]
Using that $\mu$ is a Gibbs measure associated to the subadditive potential $\psi_{s(N)}$, there exists a uniform constant $C$ such that
\[
\frac{\mu[a_1\cdots a_n]}{\mu[b_1\cdots b_n]}\leq C\frac{(\alpha_2(b_1\cdots b_n))^{2-s(N)}}{(\alpha_2(a_1\cdots a_n))^{2-s(N)}} \leq C \left(\frac{\alpha_1^{2-s(N)}}{\alpha_2^{2-s(N)}}\right)^n.
\]
Since there are $(2N^2)^n$ words $a_1\cdots a_n$,
\begin{equation}\label{Kaen}
\mu[a_1\cdots a_n]\leq (2N^2)^{-n} C \left(\frac{\alpha_1^{2-s(N)}}{\alpha_2^{2-s(N)}}\right)^n
\end{equation}
Letting $n\to\infty$ gives, by the Shannon-McMillan-Breiman theorem,
\[
h(\mu)\geq \log (2N^2)- \log \left(\frac{\alpha_1^{2-s(N)}}{\alpha_2^{2-s(N)}}\right).
\]
Finally, since each depth one cylinder in the IFS upon which $\mu_F$ is supported is the image of $N^2$ cylinders for the construction of $E_N$, 
\[
h(\mu_F)= h(\mu)-\log(N^2)\geq \log 2 - \log \left(\frac{\alpha_1^{2-s(N)}}{\alpha_2^{2-s(N)}}\right).
\]

Now writing $\phi \equiv \phi(\tau^- ,\tau^+): \mathcal Q_2 \to \mathcal Q_2$ for the contraction associated with $A$ given by \eqref{phidef}, a routine trigonometric or calculus estimate gives a lower bound for the contraction ratio:
\be\label{contlb}
\big|\phi(\theta_1) - \phi(\theta_2)\big| \geq \frac{|\tau^+  - \tau^-|}{8}|\theta_1 - \theta_2\big| 
\qquad\quad (\theta_1, \theta_2) \in \mathcal Q_2.
\ee
Then the IFS 
$\{\phi(\tau^-_1 ,\tau^+_1),\phi(\tau^-_2 ,\tau^+_2)\}$ on $\mathcal Q_2$ satisfies the strong separation condition. Using standard entropy and Lyapunov exponent arguments gives 
\be\label{furdim}
\dim_H \mu_F \geq \frac{h(\mu_F)}{\log 8 - \log \tau}\geq \frac{\log 2 - \log \left(\frac{\alpha_1^{2-s(N)}}{\alpha_2^{2-s(N)}}\right)}{\log 8 - \log \tau}.
\ee
\end{proof}

\begin{example}\label{largeprop}
In the above construction, let  $0<\tau= \tau^+ - \tau^- < \frac{\pi}{4}$, let $\alpha_1$ and $\alpha_2$ be given by \eqref{svs}, and let  $N$ be  large enough so that
\begin{equation}\label{final}\frac{2\log N + \log 2}{\log N - \log \alpha_2}  - \frac{\log(\alpha_1/\alpha_2)}{\log N - \log \alpha_1} + \frac{\log 2 - \log \left(\frac{\alpha_1^{2-s(N)}}{\alpha_2^{2-s(N)}}\right)}{\log 8 - \log \tau} \ >\ 2.\end{equation}
Then the Hausdorff, box and affinity dimensions of $E_N$ coincide, that is $\dim_H E_N = \dim_B E_N =\dim_A(A_{i,j}^1,A_{i,j}^2: 1\leq i,j\leq N)$.
\end{example}
Note that $s(N)=\dim_A(E_N)$, and so to find explicit $N$ for which (\ref{final}) holds one can use the lower bounds for $\dim_A(E_N)$ given in (\ref{diml}).

\begin{proof}
From  \eqref{furdim} and \eqref{dhe} $\dim_H \mu+ \dim_H \mu_F>2$ so that dimensions coincide by Corollary \ref{Cor2}.
\end{proof}

\subsection{An open set of parameters}
We now show that for small pertubations of the transformations $T^1_{i,j}, T^2_{i,j}$ introduced in Section \ref{sabig}, the Hausdorff dimension of the K\"aenm\"aki measure $\mu$ and the Furstenberg measure still satisfy $\dim_H \mu+\dim_H \mu_F>2$. This gives an open set of affine transformations for which the Hausdorff, box and affinity dimensions of the attractor  coincide. 

First we note that our lower bound for $\dim_H \mu$ in \eqref{dhe} of Section \ref{sabig} was based solely on the number $N$, where there are $2N^2$ contractions, together with the singular values of the matrices $A^1_{i,j}, A^2_{i,j}$ and the Lyapunov dimension of $\mu$. The singular values of a matrix are continuous in the entries of the matrix, as is the Lyapunov dimension of $\mu$ \cite{FengShmerkin}, so our lower bound for $\dim_H \mu$ is continuous under small perturbations.

The lower bound for $\dim_H \mu_F$ is slightly more subtle, since the $N^2$ cylinders corresponding to each of $T^1_{i,j}$, $T^2_{i,j}$ in the IFS generating $\mu_F$ need no longer overlap exactly. However it is still the case that, for small perturbations of the original system, at most $(N^2)^n$ cylinders of depth $n$ can cover a single $\theta\in\mathbb{PR}^1$. Furthermore, the K\"aenm\"aki measure for our perturbed system is still close to satisfying inequality (\ref{Kaen}), and so our lower bound (expressed via entropy) for the mass of depth $n$ cylinders covering a single $\theta\in\mathbb{PR}^1$ still holds. Finally, since the parameters defining the contraction ratios for the IFS defining $\mu_F$ are continuous in the perturbation, we get a lower bound for $\dim_H \mu_F$ which varies continuously as the system is perturbed.

Thus for small perturbations of the IFS of Section \ref{sabig}, the inequality $\dim_H \mu +\dim_H \mu_F>2$ remains valid , so by Corollary \ref{Cor2} the Hausdorff, box and affinity dimension of the corresponding self-affine sets coincide.

\subsection{Self-affine sets with dimension less than 1}

Finally, we construct a  family of self-affine sets of dimension less than 1, each contained in a Lipschitz  curve and with equal Hausdorff and affinity dimensions. The family is defined by a simple condition on the associated mappings $\phi_i$ on $\mathcal Q_2$ given by \eqref{phidef}, though it does not  directly depend on our main theorems. This condition, which gives open sets of affine transformations for which the Hausdorff and affinity dimensions are equal, is very different from that of Heuter and Lalley \cite{HueterLalley} who presented a different family of such sets; see also \cite{PV} for a discussion of the `size' of their parameter family. 

 We first need a linear algebra lemma on the comparability of eigenvalues and singular values which is probably in the literature, though we have been unable to find a reference.

\begin{lemma}\label{lemeig}
For all $0<\epsilon<1 $ there is a number $c>0$, depending only on $\epsilon $, such that if $A$ is a $2\times 2$ matrix with real eigenvalues $|\lambda_1|\geq |\lambda_2|>0$ and  corresponding normalised eigenvectors $e_1, e_2$ such that $|e_1 \cdot e_2| < 1-\epsilon$, then the singular values $\alpha_1 \geq \alpha_2 >0$ satisfy
\begin{equation}\label{eigsv}
c^{-1} |\lambda_i | \leq \alpha_i \leq c |\lambda_i |\qquad (i=1,2).
\end{equation}
\end{lemma}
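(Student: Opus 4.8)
The plan is to reduce all four inequalities in \eqref{eigsv} to a single upper bound on the largest singular value, $\alpha_1 = \|A\|_2 \leq c|\lambda_1|$, and then to recover the rest from two elementary and $\epsilon$-free facts. The first is that the spectral radius never exceeds the operator norm, so $\alpha_1 \geq |\lambda_1|$; the second is the determinant identity $\alpha_1\alpha_2 = |\det A| = |\lambda_1\lambda_2|$. Indeed, once $\alpha_1 \leq c|\lambda_1|$ is established, writing $\alpha_2 = |\lambda_1\lambda_2|/\alpha_1$ and inserting the two bounds on $\alpha_1$ yields $c^{-1}|\lambda_2| \leq \alpha_2 \leq |\lambda_2|$, while $c^{-1}|\lambda_1| \leq |\lambda_1| \leq \alpha_1$ (using $c \geq 1$) closes the list. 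So the entire content of the lemma is the upper bound on $\alpha_1$.

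To obtain that bound I would diagonalise. The hypothesis $|e_1\cdot e_2| < 1-\epsilon < 1$ forces $e_1$ and $e_2$ to be linearly independent, so with $P = [\,e_1 \mid e_2\,]$ and $D = \mathrm{diag}(\lambda_1,\lambda_2)$ we have $A = PDP^{-1}$, whence
\[
\alpha_1 = \|A\|_2 \leq \|P\|_2\,\|D\|_2\,\|P^{-1}\|_2 = \kappa(P)\,|\lambda_1|,
\]
where $\kappa(P) = \|P\|_2\|P^{-1}\|_2$ is the condition number of $P$. It then remains to bound $\kappa(P)$ using $\epsilon$ alone.

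The key computation is exactly that of $\kappa(P)$. Since the columns of $P$ are unit vectors, setting $t = e_1\cdot e_2$ gives
\[
P^{T}P = \begin{pmatrix} 1 & t \\ t & 1 \end{pmatrix},
\]
whose eigenvalues are $1+|t|$ and $1-|t|$, so the singular values of $P$ are $\sqrt{1+|t|}$ and $\sqrt{1-|t|}$. Hence
\[
\kappa(P) = \sqrt{\frac{1+|t|}{1-|t|}} \leq \sqrt{\frac{2-\epsilon}{\epsilon}},
\]
using $|t| < 1-\epsilon$ and monotonicity of the bracketed expression. Taking $c = \sqrt{(2-\epsilon)/\epsilon}$, which depends only on $\epsilon$ and satisfies $c>1$ since $\epsilon<1$, delivers $\alpha_1 \leq c|\lambda_1|$ and therefore, via the reductions above, the full statement \eqref{eigsv}.

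I do not expect a genuine obstacle: the argument is elementary linear algebra. The only points needing a little care are checking that the one-sided bound on $\alpha_1$ propagates correctly through the determinant identity to a two-sided comparison at both indices, and confirming that the degenerate case $\lambda_1 = \lambda_2$ is subsumed — it is, since two independent eigenvectors with a common eigenvalue force $A = \lambda_1 I$, giving $\alpha_1 = \alpha_2 = |\lambda_1|$, and in any event the diagonalisation $A = PDP^{-1}$ and its norm estimate remain valid verbatim in that case.
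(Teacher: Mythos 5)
Your proof is correct, and it refines rather than merely reproduces the paper's argument. The common core is identical: diagonalise $A = PDP^{-1}$ with $P = [\,e_1 \mid e_2\,]$ and bound $\alpha_1 = \|A\| \leq \kappa(P)\,|\lambda_1|$, where the condition number $\kappa(P) = \|P\|\,\|P^{-1}\|$ is controlled using $|e_1\cdot e_2| < 1-\epsilon$. The differences lie in the three supporting steps, and each of yours is cleaner or sharper. First, for the lower bound on $\alpha_1$ the paper reverses submultiplicativity, $|\lambda_1| \leq \kappa(P)\,\|A\|$, paying a second factor of $\kappa(P)$; you instead invoke the fact that the spectral radius is at most the operator norm, which gives $\alpha_1 \geq |\lambda_1|$ with constant $1$. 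Second, for $i=2$ the paper applies its $i=1$ conclusion to $A^{-1}$ (same eigenvectors, reciprocal eigenvalues and singular values); you pass through the identity $\alpha_1\alpha_2 = |\det A| = |\lambda_1\lambda_2|$, which costs nothing and even yields $\alpha_2 \leq |\lambda_2|$ outright. Third, the paper estimates $\kappa(P) \leq 4/(1-|e_1\cdot e_2|^2)$ by a crude norm bound; in fact its intermediate chain misstates $\alpha_1(P)\alpha_2(P) = |\det P| = \sqrt{\det P^TP}$ as $\det P^TP$ --- a harmless slip, since the final inequality still holds --- whereas your exact computation of the Gram matrix $P^TP = \bigl(\begin{smallmatrix} 1 & t \\ t & 1 \end{smallmatrix}\bigr)$ gives the sharp value $\kappa(P) = \sqrt{(1+|t|)/(1-|t|)}$ and the explicit constant $c = \sqrt{(2-\epsilon)/\epsilon}$, and is immune to that kind of error. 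For the purposes of Proposition \ref{smallprop} any constant depending only on $\epsilon$ suffices, so both proofs serve equally well downstream; yours is the tighter and arguably the more transparent of the two.
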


\begin{proof}
We may diagonalise $A$ so that $P^{-1}AP = \mbox{diag}(\lambda_1,\lambda_2)$ where $P$ has columns given by the vectors $e_1$ and $ e_2$. By the submultiplicativity of the Euclidean norm  $\|\ \|$,
\begin{equation}\label{est}
\|A\| \leq \|P\| \|P^{-1}\|\,  |\lambda_1| \quad \mbox{  and } \quad\|\lambda_1| \leq  \|P\| \|P^{-1}\| \|A\|.
\end{equation}
By direct calculation $\det P^T P = 1- |e_1 \cdot e_2|^2$, so with $\alpha_1(P)\geq \alpha_2(P)$ as the singular values of $P$,
$$\|P\| \|P^{-1}\|=\frac{\alpha_1(P)}{\alpha_2(P)} = \frac{\alpha_1(P)^2}{\alpha_1(P)\alpha_2(P)}=  \frac{\|P\|^2}{\det  P^T P}
\leq \frac{4}{1- |e_1 \cdot e_2|^2},$$
since all entries of $P$ are at most $1$ in absolute value. (In numerical analysis $\|P\| \|P^{-1}\|$ is referred to as the {\it condition number} of $P$).
Since  $\|A\| =\alpha_1$, \eqref{eigsv} follows from \eqref{est} in the case of $i=1$. The result follows for $i=2$  by applying the conclusion for $i=1$ to the inverse $A^{-1}$ which has larger eigenvalue $1/\lambda_2$ and larger singular value $1/\alpha_2$. 
\end{proof}

As before, let $J$ be the minimal closed interval in  $\mathcal Q_2$ such that $\phi_i(J) \subset J$ for all $i = 1, \ldots,m$. Assuming the strong separation condition, let $S\subset \mathbb{RP}^1$ be the closed set of directions  realised by pairs of points in distinct components of $T_i(E)$, that is $S =\{\widehat{x-y}: x\in T_i(E), y\in T_j(E) \mbox{ where } i \neq j\}$, where $\widehat{w} \in \mathbb{RP}^1$ denotes the unit vector in the direction of the vector $w$.

\begin{prop}\label{smallprop}
With notation as above, if  $J$ and $S$  are disjoint then the self-affine set $E$ is contained in a Lipschitz curve and $\dim_H E = \dim_B E =\dim_A(A_1,\cdots,A_m)$.
\end{prop}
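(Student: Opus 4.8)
The plan is to exploit the disjointness of $J$ and $S$ to show that every secant direction of $E$ avoids a fixed sub-arc of $\mathbb{RP}^1$ containing $J$, so that $E$ is a Lipschitz graph, and then to read off its dimension from the projection in a direction lying in $J$. Write $\psi_i:=\phi_i^{-1}$ for the forward projective action $\psi_i(\theta)=\widehat{A_i\theta}$, and let $S_E:=\{\widehat{x-y}:x\neq y\in E\}\subseteq\mathbb{RP}^1$ denote the set of all secant directions of $E$, where $\widehat{w}$ is the direction of $w$. Splitting a pair $x=\pi(a),\,y=\pi(b)$ according to whether $a_1=b_1$ gives the self-referential identity
\[
S_E=S\cup\bigcup_{i=1}^m\psi_i(S_E),
\]
since $x-y=A_{a_1}(x'-y')$ when $a_1=b_1$ (with $x',y'\in E$) while $\widehat{x-y}\in S$ when $a_1\neq b_1$. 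The first step is to observe that $J^c:=\mathbb{RP}^1\setminus J$ is forward invariant: from $\phi_i(J)\subseteq J$ and the fact that each $\phi_i$ is a homeomorphism, applying $\phi_i^{-1}=\psi_i$ gives $J\subseteq\psi_i(J)$, whence $\psi_i(J^c)=(\psi_i(J))^c\subseteq J^c$.

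Because $S$ is compact and disjoint from $J$, it lies in the open arc $J^c$; combined with $\psi_i(J^c)\subseteq J^c$ and the recursion, every iterate $\psi_{a_1}\cdots\psi_{a_n}(S)$ stays in $J^c$, so $\overline{S_E}\subseteq J^c$. Since with strictly positive matrices the $\psi_i$ are uniform contractions of $J^c$ towards their attractor in $\mathrm{int}\,\mathcal Q_1$, the set $\overline{S_E}$ is in fact a compact subset of $J^c$, and the forward invariance prevents the images from accumulating at the endpoints of $J$, so $\overline{S_E}$ is contained in a closed arc of length $<\pi$ disjoint from $J$. Fixing any $\theta_0$ in the interior of $J$, no secant of $E$ points in the direction $\theta_0$ and every secant makes an angle with $\theta_0$ bounded below. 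Hence the orthogonal projection $\pi_{\theta_0}$ is injective on $E$ with Lipschitz inverse, that is $\pi_{\theta_0}$ is bi-Lipschitz from $E$ onto $\pi_{\theta_0}(E)$; extending the associated function Lipschitzly over the base interval exhibits $E$ inside a Lipschitz curve, and in particular $\dim_H E=\dim_H\pi_{\theta_0}(E)$ and $\dim_B E=\dim_B\pi_{\theta_0}(E)$.

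It remains to evaluate these projected dimensions. Iterating Proposition \ref{ProjProp} writes $\pi_{\theta_0}(E)$ as the union over the words $a|_n$ of the images $g_{a|_n}\big(\pi_{\phi_{a_n\cdots a_1}(\theta_0)}(E)\big)$, where $g_{a|_n}=f_{a_1,\theta_0}\circ\cdots\circ f_{a_n,\phi_{a_{n-1}\cdots a_1}(\theta_0)}$; since $\phi_i(J)\subseteq J$, all directions appearing remain in the compact set $J$. The singular-value geometry recorded in Lemma \ref{lemB} shows that $g_{a|_n}$ is an affine contraction of ratio $\asymp\alpha_1(a|_n)$, with uniform constants because $\theta_0\in J\subset\mathrm{int}\,\mathcal Q_2$ keeps the relevant angle $\tau$ bounded away from $\pi/2$ while $\alpha_1\gg\alpha_2$. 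The images $\pi_{\theta_0}(E_{a|_n})$ are pairwise disjoint intervals in $[-1,1]$ of length $\asymp\alpha_1(a|_n)$, so $\sum_{a|_n}\alpha_1(a|_n)\lesssim1$; this forces the exponent $s^\ast$ determined by $\sum_{a|_n}\alpha_1(a|_n)^{s^\ast}\asymp1$ to satisfy $s^\ast\le1$, whence $s^\ast=\dim_A(A_1,\ldots,A_m)$ by \eqref{dimaff}, using $\phi^s(A)=\alpha_1^s$ for $0<s\le1$. Finally, the strong separation condition transported through the bi-Lipschitz $\pi_{\theta_0}$ makes these intervals uniformly separated at each level, so a standard covering bound gives $\dim_B\pi_{\theta_0}(E)\le s^\ast$ while a mass distribution carrying mass $\asymp\alpha_1(a|_n)^{s^\ast}$ on $[a|_n]$ gives $\dim_H\pi_{\theta_0}(E)\ge s^\ast$. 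Transporting back and recalling $\dim_H E\le\dim_B E\le\dim_A E$ yields $\dim_H E=\dim_B E=\dim_A E$.

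The main obstacle is the second step: converting the pointwise hypothesis $J\cap S=\emptyset$ into a uniform cone condition, namely that $\overline{S_E}$ is confined to a closed arc strictly shorter than $\pi$ and bounded away from some $\theta_0\in J$. Forward invariance $\psi_i(J^c)\subseteq J^c$ handles containment in $J^c$, but excluding accumulation of secant directions against the endpoints of $J$ — which is what actually delivers the Lipschitz and bi-Lipschitz properties — relies on the uniform contraction of the $\psi_i$ together with the compactness of $S$ and the structure of $J$ as the attractor of the $\phi_i$. Once the cone condition is in hand, the identification of the projected dimension with $\dim_A E$ is routine, since the self-similar family of Proposition \ref{ProjProp} and the geometry of Lemma \ref{lemB} reduce everything to a separated family of line intervals of sizes $\alpha_1(a|_n)$.
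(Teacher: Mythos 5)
There is a genuine gap, and it sits exactly where you declare the argument ``routine.'' Your reduction is fine up to a point: secant directions of $E$ avoid $J$ by forward invariance of $\mathbb{RP}^1\setminus J$ under $\psi_i=\phi_i^{-1}$, and (granting the uniform cone condition) $\pi_{\theta_0}$ is bi-Lipschitz on $E$, so $\dim E=\dim\pi_{\theta_0}(E)$. But the computation of $\dim\pi_{\theta_0}(E)$ does not go through as written. Injectivity of $\pi_{\theta_0}$ on $E$ makes the sets $\pi_{\theta_0}(E_{a|_n})$ pairwise \emph{disjoint compact sets}, not intervals; disjoint compact subsets of $[-1,1]$ can interleave, so neither $\sum_{a|_n}\alpha_1(a|_n)\lesssim 1$ nor the mass-distribution estimate follows from disjointness. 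What the mass distribution principle at exponent $s^\ast$ actually requires is that the children of each cylinder $E_w$ be separated, after projection, by $\gtrsim\alpha_1(w)$, i.e.\ comparably to their own diameters. Transporting the strong separation condition through the bi-Lipschitz projection gives only separation $\gtrsim d\,\alpha_2(w)$, because without further information the best lower bound is $|A_w z|\geq\alpha_2(w)|z|$; since $\alpha_2(w)/\alpha_1(w)\to 0$, this is negligible at the relevant scale and both your covering bound and your lower bound collapse. The missing ingredient --- which is the heart of the paper's proof --- is the estimate $|A_w(x_0-y_0)|\gtrsim\alpha_1(w)\,|x_0-y_0|$ for $x_0,y_0$ in distinct first-level pieces. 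This is where the hypothesis $J\cap S=\emptyset$ does its second, quantitative job: the weak eigendirection $e_2$ of every product $A_w$ lies in $J$ (it is fixed by $\phi_{a_n}\cdots\phi_{a_1}$, which maps $\mathcal Q_2$ into $J$), while $\widehat{x_0-y_0}\in S$ is uniformly bounded away from $J$; decomposing $x_0-y_0$ in the eigenbasis of $A_w$ and invoking Lemma \ref{lemeig} to compare eigenvalues with singular values yields the $\alpha_1$-lower bound \eqref{distal}. The paper then concludes with the ultrametric $\rho(x,y)=\min\{\alpha_1(w):x,y\in E_w\}$, bi-Lipschitz equivalent to the Euclidean metric on $E$, plus divergence of $\sum\alpha_1(w)^s$ over cylinder covers for $s<\dim_A$. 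Your projected picture could be completed along the same lines once the $\alpha_1$-separation is established, but without it the last step is not routine --- it \emph{is} the theorem.

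A secondary flaw: your justification of the uniform cone condition is incorrect as stated. The maps $\psi_i$ are not ``uniform contractions of $J^c$'': each $\psi_i$ has its repelling fixed point (the weak eigendirection of $A_i$) inside $J$, and $\psi_i$ expands $\mathcal Q_2$ (indeed $\psi_i(\mathcal Q_2)\supseteq\mathcal Q_2$), so $\psi_i$ is expanding near $\partial J$. The correct way to keep the images $\psi_{a_1}\cdots\psi_{a_n}(S)$ from accumulating on $\partial J$ is, once again, the same key fact: $S$ avoids the maximally contracted directions of all the products $A_w$ (these lie in $J$), so those images converge uniformly into $\mathcal Q_1$, away from $J$. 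So both the cone condition and the dimension count ultimately rest on the one estimate your write-up omits.
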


\begin{proof}
Since each $\phi_i$ maps $J$ into itself, each $\phi_i^{-1}$ maps $\mathbb{RP}^1\setminus J$ into itself. If $x,y$ are distinct points of $E$  we may write $x= T_{a_1\cdots a_n}x_0$ and $y= T_{a_1\cdots a_n}y_0$ for some $n$, where $x_0 \in T_i(E) $ and $y_0\in T_j(E)$ with $i \neq j$. In particular,  $\widehat{x_0 - y_0} \in S \subseteq
\mathbb{RP}^1\setminus J$, so that 
$$\widehat{x - y} = \widehat{A_{a_1\cdots a_n}(x_0 - y_0)} = \phi^{-1}_{a_1}\cdots \phi^{-1}_{a_n}(\widehat{x_0 - y_0})\in \mathbb{RP}^1\setminus J,$$
noting that each $\phi_i^{-1}$ is simply the action of the $A_i$ on the direction of vectors. Let $v$ be a unit vector in $J$; since $J$ and $S$ are closed and disjoint, the angle between $v$ and all vectors $x-y$ with $x,y \in E$ is bounded away from $0$, so that $E$ is contained in the graph of a Lipschitz function above an axis perpendicular to $v$.

Again with $x= T_{a_1\cdots a_n}x_0$ and $y= T_{a_1\cdots a_n}y_0$ as above, let $A_{a_1\cdots a_n}$ have  eigenvalues $|\lambda_1|\geq |\lambda_2|>0$ with corresponding normalised eigenvectors $e_1, e_2$. Then $e_1 \in 
\mathcal Q_1$ and $e_2 \in J$ so $|e_1\cdot e_2|<1-\epsilon_1$ for some $\epsilon_1 >0$ independent of $a_1\cdots a_n$. Furthermore, $\widehat{x_0-y_0} \in S$ makes an angle at least $\epsilon_2$ with $e_2$,where $\epsilon_2>0$ is the minimum angle between $S$ and $J$. It follows that we may write
$$ x_0 - y_0 = r_1 e_1 + r_2 e_2$$
where $r_1$ and $r_2$ are scalars such that $|r_1|\geq b_1|r_2|$, so also $|r_1|\geq b_2|x_0 - y_0|$, where $b_1, b_2>0$ depend only on $\epsilon_1$ and $\epsilon_2$. Then
$$x-y = T_{a_1\cdots a_n}(x_0-y_0)= A_{a_1\cdots a_n}(x_0-y_0)
= A_{a_1\cdots a_n}(r_1 e_1 + r_2 e_2) = r_1 \lambda_1e_1 + r_2 \lambda_2 e_2.$$
Using Lemma \ref{lemeig}, 
\begin{align}|x-&y| \geq  |r_1| (|\lambda_1| - |\lambda_2|/b_1) \geq b_3 |r_1| \big(\alpha_1(a_1\cdots a_n)- b_4 \alpha_2(a_1\cdots a_n)\big)\nonumber\\
&\geq b_3 |r_1| \alpha_1(a_1\cdots a_n) /2 \geq b_5|x_0 - y_0|\alpha_1(a_1\cdots a_n) \geq b_5 d\alpha_1(a_1\cdots a_n) \label{distal}
\end{align}
where $d>0$ is the minimum separation of the  $T_i(E)$, provided that $n \geq n_0$ is sufficiently large, where the $b_i$ and $n_0$ do not depend on $x,y$ or $(a_1\cdots a_n)$.

Define a metric $\rho$ on $E$ by $\rho(x,y) = \min\{\alpha_1(a_1\cdots a_n) : x, y\in  E_{a_1\cdots a_n}\}$ for $x,y \in E,\ x\neq y$. Then $\rho$ is well-defined and is an ultrametric by virtue of the tree structure of $\Sigma$. Moreover, it follows from \eqref{distal} that the identity $i: (E, |\cdot|) \to (E, \rho)$ is a Lipschitz (in fact a bi-Lipschitz) mapping . 

Let $0<s< \dim_A (A_1,\ldots,A_m)<1$. Suppose that  $E \subset \bigcup_ { (a_1\cdots a_n) \in {\mathcal S} }  E_{a_1\cdots a_n} $ for some ${\mathcal S}\subset  \bigcup_{k=0}^\infty \{1,2,\ldots,m\}^k $, that is  the cylinders defined by ${\mathcal S}$ cover $\Sigma$.  It follows from the submultiplicativity of the $\alpha_1$ and the definition of $\dim_A E$ that  $ \sum_ { (a_1\cdots a_n) \in {\mathcal S} }\alpha_1(a_1\cdots a_n)^s = \infty$, see, for example,  \cite[Proposition 4.1]{Falconer88}. Thus the Hausdorff dimension of $E$ with respect to the metric $\rho$ is at least $s$, so as $i: (E, |\cdot|) \to (E, \rho)$ is Lipschitz the same is true with respect to the usual metric   $|\cdot|$. 
This is true for all $0<s< \dim_A(A_1,\ldots,A_m)$, so $\dim_H E\geq \dim_A(A_1,\ldots,A_m)$, and the opposite inequality holds for all self-affine sets, see \cite{Falconer88}.

\end{proof}

\begin{figure}[h]\label{fig2}
\begin{center}
\includegraphics[scale=0.35]{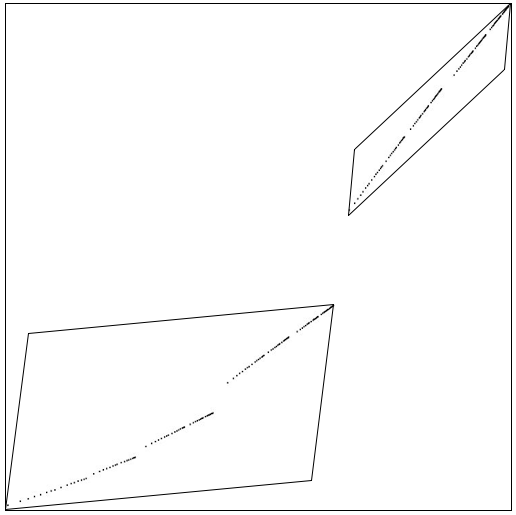}
\quad
\includegraphics[scale=0.35]{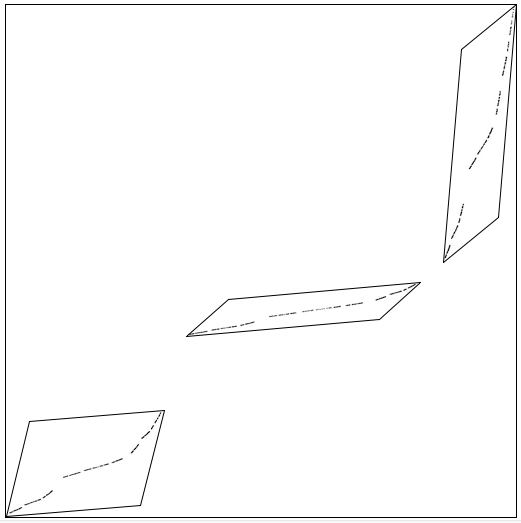}
\caption{Templates for self-affine sets with equal Hausdorff and affinity dimensions less than 1 }
\end{center}
\end{figure}

It is easy to specify sets of affine transformations satisfying  Proposition \ref{smallprop}, templates for two examples are shown in  Figure 2. 
\begin{example}\label{egsmall}
For $i=1,2$ let 
$P_i = 
\left(
\begin{array}{cc}
1  &   -b_i   \\
  c_i&   1
\end{array}
\right),
$ 
where $b_i,c_i>0$, and let $\lambda_i > \mu_i >0$.
Define an iterated function system by
\begin{equation}\label{smallifs}
T_1 =P_1  \left(\begin{array}{cc}
\lambda_1  &   0  \\ 0&   \mu_1
\end{array}\right)
P_1^{-1}, \quad T_2= P_2 \left(\begin{array}{cc}
\lambda_2  &   0  \\ 0&   \mu_2
\end{array}\right)P_2^{-1} 
+ \binom{a}{b}.
\end{equation}
If
\begin{equation}\label{conds}
 \lambda_1\big(1+\max\{b_1,c_1\}\big)^2 < a,b < 1-\lambda_2\big(1+\max\{b_2,c_2\}\big)^2
 \end{equation}
 then $E$ is contained in a Lipschitz curve within the unit square  and $\dim_H E = \dim_B E =\dim_A(A_1,A_2)$
 where $A_i=P_i\,{\rm diag}(\lambda_i, \mu_i)P_i^{-1}$ are the linear parts of the affine maps $T_i$.
\end{example}

\begin{proof}
Note that the matrices $P_i\,{\rm diag}(\lambda_i, \mu_i)P_i^{-1}$ map the first quadrant into itself without orientation reversal. Condition \eqref{conds} ensures that the $T_i$ map the unit square into itself with the projections of $T_1( [0,1]^2)$ and $T_2 ([0,1]^2)$ onto both horizontal and vertical axes disjoin, noting that
$\| P_i\,{\rm diag}(\lambda_i, \mu_i)P_i^{-1}\|_\infty \leq \lambda_i\big(1+\max\{b_i,c_i\}\big)^2$. It follows that, with the notation of Proposition \ref{smallprop}, $S\subset \mathcal Q_1$, but $J \subset \mathcal Q_2$ (in fact $J$ is the interval bounded by the directions of the eigenvectors of $P_1$ and $P_2$ corresponding to the smaller eigenvalues). In particular $J$ and $S$ are disjoint and the conditions of Proposition \ref{smallprop} are satisfied.
\end{proof}

Example \ref{egsmall} provides an open set of IFSs with respect to the natural parameterization for which the attractor $E$ has equal Hausdorff dimension and affinity dimension. To see this, note that a matrix $A_i$ that maps the first quadrant into itself can always be diagonalised using a matrix $P_i$  of the form stated, and also nothing is lost by setting the translation component of $T_1$ to 0 (since adding a constant translation to both maps just shifts the attractor correspondingly).

\bibliographystyle{plain} 
\bibliography{Lyapunov.bib}
\end{document}